\newtheorem{thm}{Theorem}
\newcommand{\be}{\begin{equation}}
\newcommand{\ee}{\end{equation}}
\newcommand{\bx}{\mathbf{x}}
\newcommand{\by}{\mathbf{y}}
\newcommand{\Dx}{\Delta x}
\newcommand{\bu}{\mathbf{u}}
\newcommand{\bw}{\mathbf{w}}
\newcommand{\Dt}{\Delta t}
\newcommand{\bbone}{\mathbf{1}}
\newcommand{\dt}{\Delta t}
\newcommand{\Complex}{\mathbb{C}}
\newcommand{\m}[1]{\mathbf{#1}}
\newcommand{\mA}{\m{A}}
\newcommand{\mAh}{\hat{\m{A}}}
\newcommand{\mD}{\m{D}}
\newcommand{\mS}{\m{S}}
\newcommand{\mT}{\m{T}}
\newcommand{\mR}{\m{R}}
\newcommand{\mP}{\m{P}}
\newcommand{\mI}{\m{I}}
\newcommand{\mzero}{\m{0}}
\renewcommand{\v}[1]{\boldsymbol{#1}}
\newcommand{\transpose}{^\mathrm{T}}
\newcommand{\vb}{\v{b}}
\newcommand{\ve}{\v{e}}
\newcommand{\vl}{\boldsymbol{l}}
\newcommand{\vd}{\v{d}}
\newcommand{\vf}{\v{f}}
\newcommand{\sspcoef}{\mathcal{C}}
\newcommand{\ceff}{\sspcoef_{\textup{eff}}}
\newcommand{\DtFE}{\Dt_{\textup{FE}}}
\newcommand{\lte}{\tau}
\newcommand{\ste}{\boldsymbol{\tau}}
\newcommand{\btheta}{\boldsymbol{\theta}}
\newcommand{\bff}{\mathbf{f}}
\newcommand{\bb}{\mathbf{b}}
\newcommand{\bbh}{\hat{\mathbf{b}}}
\newcommand{\bc}{\mathbf{c}}
\newcommand{\Rmin}{\overline{R}(\psi_1,\dots,\psi_k)}
\renewcommand{\v}[1]{\mathbf{#1}}
\title{Explicit Strong Stability Preserving Multistep Runge--Kutta Methods}
\author{%
Christopher Bresten\thanks{Mathematics Department, University of Massachusetts Dartmouth.
Supported by AFOSR grant number FA-9550-12-1-0224 and KAUST grant FIC/2010/05},
Sigal Gottlieb\footnotemark[1],
 Zachary Grant\footnotemark[1], \\
 Daniel Higgs\footnotemark[1], 
David I. Ketcheson\thanks{King Abdullah University of Science \& Technology (KAUST). },
and Adrian N\'emeth\thanks{Department of Mathematics and Computational Sciences, Sz\'echenyi Istv\'an University, Gy\H{o}r, Hungary}
}
\begin{document}
\maketitle


\bibliographystyle{siam}

\begin{abstract} 
High-order spatial discretizations with strong stability properties (such as monotonicity)
are desirable for the solution of hyperbolic PDEs. Methods may be compared
in terms of the strong stability preserving (SSP) time-step.
We prove an upper bound on the SSP coefficient of explicit multistep Runge--Kutta methods
of order two and above.  Order conditions and monotonicity conditions for such methods
are worked out in terms of the method coefficients.
Numerical optimization is used to find optimized explicit methods of up to five
steps, eight stages, and
tenth order. These methods are tested on
the advection and Buckley-Leverett equations, and the results
for the observed total variation 
diminishing and positivity preserving time-step are presented.
\end{abstract}

\section{Introduction\label{sec:intro}}

The numerical solution of hyperbolic conservation laws 
$ U_t +f(U)_x = 0$,
is complicated by the fact that the exact solutions may develop discontinuities. 
For this reason, significant effort has been expended on finding spatial discretizations
that can handle discontinuities \cite{SSPbook2011}.  
Once the spatial derivative is discretized, we obtain the  system of ODEs 
\begin{eqnarray}
\label{ode}
u_t = F(u),
\end{eqnarray}
where $u$ is a vector of approximations to $U$,  $u_j \approx U(x_j) $. This system of ODEs can then be 
evolved in time using standard methods.
The spatial discretizations used to approximate $f(U)_x$   are 
carefully designed so that when \eqref{ode}  is  evolved in time using the 
forward Euler  method the solution at time $u^n$ satisfies the strong stability property
\begin{align} \label{FEcond}
\| u^n + \dt F(u^{n}) & \| \leq \| u^n \| & \text{ under the step size restriction }
0 & \leq \dt \leq \DtFE.
\end{align}
Here and throughout, $\| \cdot \|$  represents a norm, semi-norm, or convex functional, 
determined by the design of the spatial discretization. For example, for total variation diminishing
methods the relevant strong stability property is in the total variation semi-norm,
while when using a positivity preserving limiter we are naturally interested in the
positivity of the solution.

The spatial discretizations satisfy the desired property when coupled with the forward Euler time discretization,
but in practice we want to use a higher-order time integrator rather than forward Euler, while still
ensuring that the  strong stability  property 
\begin{align}\label{monotonicity}
\| u^{n+1} \| \le \|u^n\|
\end{align}
is satisfied.

In \cite{shu1988b} it was observed that some Runge--Kutta methods can be decomposed into convex combinations
of forward Euler steps, and so any  convex functional  property satisfied by forward Euler will be {\em preserved}
by these higher-order time discretizations, generally under a different time-step restriction. This approach was used
to develop second and third order Runge--Kutta methods that preserve the strong stability properties of the spatial
discretizations developed in that work. In fact, this approach also guarantees that 
the intermediate stages in a Runge--Kutta method  satisfy the strong stability property as well.

For multistep methods, where  the solution value $u^{n+1}$ at time $t^{n+1}$  is computed from 
previous solution values $u^{n-k+1},\dots,u^n$,
we say that a  $k$-step numerical method is  {\em strong stability preserving} (SSP) if 
\be \label{kstepmonotonicity}
\|u^{n+1}\|\le\max\left\{\|u^n\|,\|u^{n-1}\|,\dots,\|u^{n-k+1}\|\right\}.
\ee
for any time-step \be \label{tstepcond} 0 \Dt\le \sspcoef \DtFE, \ee
(for some $\sspcoef>0$), assuming only that the spatial discretization satisfies \eqref{FEcond}.
An  explicit multistep method of the form
\begin{eqnarray} \label{lmmSO}
u^{n+1} & = & \sum_{i=1}^{k} \left( \alpha_i u^{n+1-i} +
\dt \beta_i F(u^{n+1-i}) \right)
\end{eqnarray}  
has $\sum_{i=1}^{k} \alpha_i =1$ for  consistency, so if all the 
coefficients are non-negative ($\alpha_i,\beta_i\ge0$) the method
can be written as  convex combinations of forward Euler steps:
\begin{eqnarray*}
u^{n+1} & = & \sum_{i=1}^{k} \alpha_i  \left( u^{n+1-i} +
\frac{\beta_i}{\alpha_i} \dt  F(u^{n+1-i}) \right).
\end{eqnarray*}  
Clearly,  if the forward Euler condition  \eqref{FEcond} holds
then the solution obtained by the multistep method (\ref{lmmSO})
is strong stability preserving  under the time-step restriction \eqref{tstepcond} with
$\sspcoef =  \min_{i} \frac{\alpha_i}{\beta_i} \DtFE$, 
(where if any $\beta_i$ is equal to zero, the corresponding  ratio is considered infinite)~\cite{shu1988b}.

The convex combination approach has also been applied to obtain sufficient
conditions for strong stability for 
{\em implicit} Runge--Kutta methods and {\em implicit} linear multistep methods.
Furthermore, it has be shown that these conditions are not only sufficient, but necessary as well
 \cite{ferracina2004, ferracina2005,higueras2004a, higueras2005a}.
Much research on SSP methods focuses on finding high-order time discretizations
with the largest allowable time-step  $\Dt \le \sspcoef \DtFE.$ Our aim is to maximize
the   {\em SSP coefficient} $\sspcoef$ of the method, relative to the number of function evaluations
at each time-step (typically the number of stages of a method).
For this purpose we define the {\em effective SSP coefficient} $\ceff = \frac{\sspcoef}{s}$
where $s$ is the number of stages. This value allows us to compare the efficiency of 
explicit methods of a given order.

Explicit  Runge--Kutta methods with positive SSP coefficients cannot be more than
fourth-order accurate \cite{kraaijevanger1991,ruuth2001}, while  explicit SSP linear
multistep methods of high-order accuracy must use very many steps, and therefore 
impose large storage requirements~\cite{SSPbook2011,lenferink1989}.  
These characteristics have led to the design of  explicit methods with multiple steps and multiple stages in  
the search for higher-order SSP methods with large effective SSP coefficients.
In \cite{gottlieb2001} Gottlieb et.~al.~considered a class of two-step, two-stage methods.
Huang \cite{huang2009} considered two-stage hybrid methods with many steps, and
found methods of up to seventh order (with seven steps) with reasonable SSP coefficients.
Constantinescu and Sandu \cite{constantinescu2009} found multistep Runge--Kutta with
up to four stages and four steps, with a focus on finding SSP methods with order up to four.
Multistep Runge--Kutta SSP methods with order as high as twelve have been developed
in \cite{nguyen2011} and numerous similar works by the same authors, using sufficient
conditions for monotonicity and focusing on a single set of parameters in each work.
Spijker \cite{spijker2007} developed a complete theory for strong stability preserving 
multi-step multi-stage methods
and found new second order and third order methods with optimal SSP coefficients.
In \cite{tsrk}, Spijker's theory (including necessary and sufficient conditions for 
monotonicity) is applied to two-step Runge--Kutta methods to develop two-step multi-stage 
explicit methods with optimized SSP coefficients. In the present work we 
present a general application of the same theory to multistep Runge--Kutta
methods with more steps.  We determine necessary and sufficient conditions for strong
stability preservation and prove sharp upper bounds on $\sspcoef$ for second order methods.
We also find and test optimized methods with up to five steps and up to tenth order.
The approach we employ ensures that the intermediate stages of each method also
satisfy a strong stability property.


In Section \ref{sec:sspglms} we extend the order conditions and SSP conditions from two step Runge--Kutta methods
 \cite{tsrk} to MSRK methods with  arbitrary numbers of steps and stages.
In Section \ref{sec:bounds} we recall an upper bound on $\sspcoef$ for general
linear methods of order one
and prove a new, sharp upper bound on $\sspcoef$ for general linear methods of order two. 
These bounds are important to our study because
the explicit MSRK methods we consider are a subset of the class of general linear methods.
In Section \ref{sec:optimal} we formulate and numerically solve the problem of determining
methods with the largest $\sspcoef$ for a given order and number of stages and steps.
We present the effective SSP coefficients of optimized methods of up to five steps and tenth order,
thus surpassing the order-eight barrier established in \cite{tsrk} for two-step methods.
Most of the methods we find have higher effective SSP coefficients than methods previously found,
though in some cases we had  trouble with the optimization subroutines for higher orders.
Finally, in Section \ref{sec:test} we explore how well these methods perform in practice, 
on a series of well-established test problems. We highlight the need for higher-order methods and the behavior 
of these methods in terms of strong stability and positivity preservation.

\section{SSP Multistep Runge--Kutta Methods\label{sec:sspglms}}
In this work we study methods in the class of multistep Runge-Kutta methods with optimal strong stability preservation properties.
These multistep Runge--Kutta methods are a simple generalization of Runge--Kutta methods to include the numerical 
solution at previous steps.  These methods are Runge--Kutta methods in the sense that they compute multiple stages based
on the initial input; however, they  use the previous $k$ solution values  $u^{n-k+1}, u^{n-k}, . . . , u^{n-1}, u^n$ to  compute the solution value $u^{n+1}$. 

A class of two-step Runge--Kutta methods was studied in \cite{tsrk}.
Here we study the generalization of that class to an arbitrary number of steps:
\begin{subequations} \label{eq:mrktypeII}
\begin{align}
y_1^n & = u^n \\
y_i^n & = \sum_{l=1}^{k} d_{il} u^{n-k+l} + \Dt\sum_{l=1}^{k-1} \hat{a}_{il} F(u^{n-k+l}) + \Dt\sum_{j=1}^{i-1} a_{ij} F(y_j^n) \; \; \; \;  2 \leq i \leq s \\
u^{n+1} & = \sum_{l=1}^{k} \theta_l u^{n-k+l} + \Dt\sum_{l=1}^{k-1} \hat{b}_{l} F(u^{n-k+l}) + \Dt\sum_{j=1}^s b_j F(y_j^n).
\end{align} \label{ksrk_b}
\end{subequations}
Here the values $u^{n-k+j}$  denote the previous steps and  $y^n_j$ are intermediate stages used to compute the next solution value $u^{n+1}$.  
The form \eqref{eq:mrktypeII} is convenient for identifying the computational cost of the method:  it is evident that $s$ new function evaluations
are needed to progress from $u^{n}$ to $u^{n+1}$. 

To study the strong stability preserving properties of method
\eqref{eq:mrktypeII}, we write it in the form \cite{spijker2007}
\begin{align}
\label{spijkerform-compact}
\bw & = \mS\bx + \Dt \mT \vf.
\end{align}
To accomplish this, we stack the last $k$ steps into a column vector:
\[\bx = \left[u^{n-k+1},u^{n-k+2};\dots,u^{n-1};u^n\right].\]
We define a column vector of length $k+s$ that contains these steps and the stages:
\[\bw = \left[u^{n-k+1};u^{n-k+2};\dots,u^{n-1};y_1=u^n;y_2;\dots;y_s;u^{n+1}\right] ,\]
and another column vector containing the derivative of each element of $\bw$:
\[\vf=\left[F\left(u^{n-k+1}\right);F\left(u^{n-k+2}\right);\dots;F\left(u^{n-1}\right),
F\left(y_1\right);\dots;F\left(y_s\right);F\left(u^{n+1}\right)\right]\transpose.\]
Here we have used the semi-colon to denote (as in MATLAB) vertical concatenation of vectors.
Thus, each of the above is a column vector.

Now the method (\ref{eq:mrktypeII}) can be written in the matrix-vector form \eqref{spijkerform-compact}
where the matrices $\mS$ and $\mT$ are
\begin{align} \label{msrk-spijker}
\mS & = \begin{pmatrix}\mI_{(k-1) \times (k-1)} \ \mzero_{1 \times (k-1)} \\ \mD \\\btheta\transpose \end{pmatrix} \ \ \ \
\mT   = \begin{pmatrix}\mzero & \mzero & 0 \\ \mAh & \mA & \mzero \\ \bbh\transpose & \bb\transpose & 0 \end{pmatrix}.
\end{align}
The matrices $\mD, \mA, \mAh$ and the vectors $\btheta,  \bbh, \bb$ 
contain the coefficients $d_{il}, \hat{a}_{il}, a_{ij}$ and $\theta_l, \hat{b}_l, b_j$
from (\ref{eq:mrktypeII}); note that 
the first row of $\mD$ is $(0,0,\dots,0,1)$ and the first row of
$\mA,\mAh$ is identically zero. Consistency requires that 
\begin{align*}
\sum_{l=1}^k \theta_l & = 1, \\
\sum_{l=1}^k d_{il} & = 1  & 1\le i \le s.
\end{align*}
We also assume that (see \cite[Section 2.1.1]{spijker2007})
\be \label{eq:Ssum}
\mS \ve = \ve,
\ee
where $\ve$ is a column vector with all entries equal to unity.
This condition is similar to the consistency conditions, and 
implies that every stage is consistent when viewed as a quadrature rule.

In the next two subsections we use representation \eqref{spijkerform-compact} to study monotonicity properties of the method \eqref{eq:mrktypeII}.
The results in these subsections are a straightforward generalization of the corresponding
results in \cite{tsrk}, and so
the discussion below is brief and the interested reader is referred to \cite{tsrk} for more detail. 

\subsection{A review of the SSP property for multistep Runge--Kutta methods}
To write \eqref{spijkerform-compact} as a linear combination of forward Euler steps, 
we add the term $r\mT \bw$ to both sides of \eqref{spijkerform-compact}, obtaining
\begin{align*}
    \left(\mI+r\mT\right) \bw & = \mS\bx 
                      + r\mT\left(\bw+\frac{\Dt}{r}\vf\right).
\end{align*}
We now left-multiply both sides by $\left(\mI+r\mT\right)^{-1} $ (assuming it exists) to obtain
\begin{align}  
    \bw & = (\mI+r\mT)^{-1} \mS\bx 
                      + r(\mI+r\mT)^{-1}\mT\left(\bw+\frac{\Dt}{r}\vf\right)  \nonumber \\ 
        & = \mR \bx + \mP\left(\bw+\frac{\Dt}{r}\vf\right), \label{canonical} 
\end{align}
where
\be   \label{canonical2}
\mP=r (\mI+r\mT)^{-1} \mT, \ \ \ \ \mR= (\mI+r\mT)^{-1} \mS= (\mI-\mP)\mS .
\ee

In consequence of the consistency condition \eqref{eq:Ssum},  
the row sums of $[\mR \ \mP]$ are each equal to one:
$$
\mR\ve+\mP\ve = (\mI-\mP)\mS\ve+\mP\ve = \ve - \mP\ve + \mP\ve = \ve.
$$
Thus if $\mR$ and $\mP$ have no negative entries, then each stage $w_i$ is
a convex combination of the inputs $x_j$ and the forward Euler quantities
$w_j+(\Dt/r) F(w_j)$. 
It is then simple to show (following \cite{spijker2007}) that
any strong stability property of the forward Euler method is 
preserved by the method \eqref{spijkerform-compact} under the time-step
restriction $\dt \leq \sspcoef(\mS,\mT)\DtFE$ where
$\sspcoef(\mS,\mT)$ is defined as
\begin{align*}
\sspcoef(\mS,\mT) & = \sup_{r}\left\{r : (\mI+r \mT)^{-1} \mbox{ exists and }  \mP \ge 0, \mR \ge 0 
\right\}.
\end{align*}
Hence the SSP coefficient of method
\eqref{canonical} is greater than or equal to $\sspcoef(\mS,\mT)$.
In fact, following \cite[Remark~3.2]{spijker2007}) we can conclude that
if the method is row-irreducible,  then the SSP coefficient 
is, in fact, exactly equal to $\sspcoef(\mS,\mT)$. (For the definition of row reducibility,
see \cite[Remark~3.2]{spijker2007}) or \cite{tsrk}).

\subsection{Order conditions}
In \cite{tsrk} we derived order conditions for methods of the form \eqref{eq:mrktypeII} with two steps. 
Those conditions extend in a simple way to method \eqref{eq:mrktypeII}
with any number of steps. For convenience, we rewrite \eqref{eq:mrktypeII} in the form
\begin{subequations} \label{eq:mrk}
\begin{align} 
\by^n & = \tilde{\mD} \bu^n + \Delta t  \tilde{\mA} \bff^n \\
u^{n+1} & = \btheta\transpose \bu^n + \Delta t \tilde{\bb}\transpose \bff^n
\end{align}
\end{subequations}
where 
\begin{align} 
\tilde{\mD} & = \begin{pmatrix}\mI_{(k-1) \times (k-1)} \ \mzero_{1 \times (k-1)} \\ \mD \\   \end{pmatrix} \ \ \ \
\tilde{\mA}   = \begin{pmatrix}\mzero & \mzero  \\ \mAh & \mA  \\  \end{pmatrix} \ \ \ \
\tilde{\bb} =  \begin{pmatrix} \bbh & \bb  \end{pmatrix}, 
\end{align}
and
$\by^n  = [u^{n-k+1}; u^{n-k+2}; \dots u^{n-1}; y_1^n;\dots;y_s^n], 
$ and $\bff^n  = F(\by^n)$  
are the vector of stage values and stage derivatives, respectively,
and  $\bu^n = [u^{n-k+1}, u^{n-k+2}, \dots, u^{n}]$ is the vector of previous step values.

The derivation of the order conditions closely follows Section 3 of \cite{tsrk}
with the following changes: (1) the vector $\vd$, is replaced by the matrix $\mD$;
(2) the scalar $\theta$ is replaced by the vector $\btheta$; and (3) the vector 
 $\vl = (k-1, k-2,\dots,1,0)^T$ appears in place of the number $1$ in the expression for the 
 stage residuals, which are thus:
\[ 
\ste_k  = \frac{1}{k!}\left(\bc^k - \tilde{\mD} (-\vl)^k\right) - \frac{1}{(k-1)!} \tilde{\mA} \bc^{k-1}, \ \ \ \ \ \ \ 
\lte_k  = \frac{1}{k!}\left(1 - {\btheta\transpose} (-\vl)^k\right) - \frac{1}{(k-1)!} \tilde{\vb}\transpose \bc^{k-1}, \]
where $\bc = \tilde{\mA}\ve- \tilde{\mD} \vl$  and exponents are to be interpreted element-wise.
The derivation of the order conditions is identical to that in \cite{tsrk} except for these changes.

A method is said to have stage order $q$ if $\ste_k$ and $\lte_k$ vanish for all $k\le q$.
The following result is a simple extension of Theorem 2 in \cite{tsrk}.

\begin{thm} \label{thm:stage-order}
Any irreducible MSRK method \eqref{ksrk_b} of order $p$ with positive SSP coefficient
has stage order at least $\lfloor \frac{p-1}{2} \rfloor$.
\end{thm}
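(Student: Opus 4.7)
The plan is to adapt the argument for Theorem 2 of \cite{tsrk} (the two-step case) to the general multistep setting, with the main adjustments coming from the appearance of the shift vector $\vl = (k-1, k-2, \dots, 1, 0)\transpose$ in the stage residual formulas and from treating the $k-1$ previous-step rows of $\tilde{\mD}$ and $\tilde{\mA}$ as ``free'' data.

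First I would invoke the hypothesis that $\sspcoef > 0$ together with Spijker's characterization recalled above: there exists some $r > 0$ for which $(\mI + r\mT)^{-1}$ exists and both
\[
\mP = r(\mI+r\mT)^{-1}\mT \ge 0 \qquad \text{and} \qquad \mR = (\mI-\mP)\mS \ge 0
\]
entrywise, with row sums $\mR\ve + \mP\ve = \ve$. This puts every stage $y_i^n$ in the form of a convex combination of the previous-step inputs $u^{n-k+l}$ and the forward-Euler perturbations $y_j^n + (\Dt/r)F(y_j^n)$ of earlier stages. Irreducibility ensures that no stage can be deleted, so every row of $[\mR\ \mP]$ carries genuine information.

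Next I would rewrite the stage residuals in the new variables. Using the identity $\mT = \mP/r + \mP\mT$ (equivalently $\mP = r\mT - r\mT\mP/\ldots$, inverted from the definition) and $\tilde{\mA}\bc^{k-1}$, $\tilde{\mD}(-\vl)^k$ expressed through $\mR$ and $\mP$, one obtains an identity of the schematic form
\[
\ste_k \;=\; \mP \, \Phi_{k-1}(\bc,\vl) \;+\; \mR \, \Psi_k(\vl),
\]
where $\Phi_{k-1}$ and $\Psi_k$ depend polynomially on the inputs and include the ``exact'' stage residuals that would have been required from an order-$k$ underlying collocation-type solution. Because every entry of $\mR$ and $\mP$ is nonnegative, each component of $\ste_k$ is a conic combination of these quantities. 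I would then argue by induction on the stage index $i$: $y_1^n = u^n$ makes $(\ste_k)_1 = 0$ trivially for the first stage, and the $k-1$ ``previous-step'' rows have zero residual by definition (they are exact values being fed in). For the inductive step, suppose $(\ste_k)_j = 0$ for $j < i$ and $k \le q$, where $q = \lfloor (p-1)/2 \rfloor$. If $(\ste_k)_i$ were nonzero for some such $k$, then the global order-$(k+q+1)$ residual $\lte_{k+q+1}$ --- which combines $\tilde{\vb}\transpose$ with products of stage residuals of total order $\le p$ --- would pick up a term whose coefficient, by positivity of $\tilde{\vb}$ forced through $\mR, \mP \ge 0$ (the analogue of the Butcher-type argument in Kraaijevanger/Ruuth), cannot be cancelled. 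Since $2q+1 \le p$, this would contradict the assumption that the method has order $p$.

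The main obstacle is bookkeeping in the third paragraph: producing the precise identity relating $\ste_k$ to $\mP, \mR$ in the presence of the shift vector $\vl$, so that the induction transfers cleanly. In the two-step case of \cite{tsrk}, $\vl$ collapses to a scalar and the formulas involve only $\bc^k$ and $(-1)^k$; in the general case, each previous-step index contributes its own monomial in $\vl$, and one must check that the ``previous-step'' block of $\tilde{\mD}$ --- which by construction gives zero residual --- does not spoil the nonnegativity argument for the remaining $s$ rows. Once this algebraic identity is verified, the induction and the contradiction from $\lte_{2q+2}$ go through essentially as in \cite{tsrk}, yielding stage order at least $q = \lfloor (p-1)/2 \rfloor$.
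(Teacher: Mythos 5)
Your proposal assembles the right preliminaries (nonnegativity of $\mR$, $\mP$, and hence of the coefficients, from $\sspcoef>0$; the role of $\vl$; the observation that the $k-1$ previous-step rows have identically vanishing residuals), but the engine of the proof is missing. The paper itself gives no argument beyond citing Theorem~2 of \cite{tsrk}, whose proof (following Kraaijevanger's Theorem~8.5 for Runge--Kutta methods) rests on a \emph{sum-of-squares} combination of order conditions, and nothing in your write-up reproduces or replaces that step. Concretely: in Albrecht's framework every order condition beyond the quadrature conditions $\lte_j=0$ is \emph{linear} in the stage residual vectors, of the form $\tilde{\vb}\transpose \m{W}\,\ste_k=0$ with $\m{W}$ a product of $\mathrm{diag}(\bc)$'s and $\tilde{\mA}$'s; a single nonzero component $(\ste_k)_i$ never contradicts such a condition, since other components can cancel it. So your claim that a nonzero $(\ste_k)_i$ contributes to $\lte_{k+q+1}$ ``a term whose coefficient cannot be cancelled'' is unsubstantiated --- indeed $\lte_j$ is just the residual of the final quadrature rule and contains no products of stage residuals at all --- and the proposed induction on the stage index has no mechanism behind it. The schematic identity $\ste_k=\mP\Phi_{k-1}+\mR\Psi_k$ is likewise neither established nor useful, because $\Phi_{k-1}$ and $\Psi_k$ carry no definite sign, so nonnegativity of $\mP$ and $\mR$ yields nothing.

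The missing idea is the following. Fix $k$ with $2k+1\le p$ and take the linear combination of the order conditions of order $2k+1$ associated with the trees $[\tau^{2k}]$, $[\tau^{k}[\tau^{k-1}]]$ and $[[\tau^{k-1}][\tau^{k-1}]]$ (equivalently, the corresponding combination of Albrecht conditions); the known right-hand sides cancel exactly and one is left with
\begin{equation*}
\sum_i \tilde b_i\,\bigl(\ste_k\bigr)_i^2 \;=\; 0 ,
\end{equation*}
where the previous-step rows drop out because their residuals vanish identically and the shift vector $\vl$ enters only through the (exact) B-series of $u(t_n-j\Dt)$. Positivity of the SSP coefficient gives $\tilde{\vb}\ge 0$ and $\tilde{\mA}\ge 0$, and irreducibility is then what upgrades this to strict positivity of the weights multiplying each $(\ste_k)_i^2$ (running the same argument with $\tilde{\vb}\transpose$ composed with powers of the nonnegative coefficient matrices reaches every stage), forcing $\ste_k=\mathbf{0}$. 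Iterating over all $k$ with $2k+1\le p$ gives stage order at least $\lfloor (p-1)/2\rfloor$. Without this quadratic identity --- or some substitute for it --- your argument does not close.
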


Note that the approach used in \cite{tsrk}, which is based on the work of
Albrecht \cite{albrecht1996}, 
produces a set of order conditions that are {\em equivalent} to the set of conditions
derived using B-series. However, the two sets have different equations.
Albrecht's approach has two advantages over that based on B-series in the present context.  
First, it leads to algebraically simpler conditions that are almost identical in appearance to those
for one-step RK methods.
Second, it leads to conditions in which the residualts $\ste_k$ appear explicitly.  
As a result, very many of the order conditions are {\em a priori} satisfied by
methods with high stage order,
due to Theorem \ref{thm:stage-order}.  This simplifies the numerical optimization problem
that is formulated in Section \ref{sec:optimal}.

\section{Upper bounds on the SSP coefficient\label{sec:bounds}}
In this section we present upper bounds on the SSP coefficient
of general linear methods of first and second order.  These upper bounds
apply to all explicit multistep multistage methods, not just those of form \eqref{eq:mrktypeII}.
They are obtained by considering a relaxed optimization problem.
Specifically, we consider monotonicity and order conditions for methods
applied to linear problems only.


Given a function $\psi : \Complex \to \Complex$, let $R(\psi)$ denote
the {\em radius of absolute monotonicity}:
\begin{align}
R(\psi) & = \sup \{ r\ge0 \ \ | \ \ \psi^{(j)}(z)\ge 0 \mbox{ for all } z\in[-r,0]\}.
\end{align}
Here the $\psi^{(j)}(z)$ denotes the $j$th derivative of $\psi$ at $z$.
Any explicit general linear method applied to the linear, scalar ODE $u'(t)=\lambda u$
results in an iteration of the form
\begin{align}
u^{n+1} & = \psi_1(z)u_n + \psi_2(z) u_{n-1} + \cdots + \psi_k(z) u_{n-k+1},
\end{align}
where $z=\Dt\lambda$ and $\{\psi_1,\dots,\psi_k\}$ are polynomials of degree
at most $s$.  The method is strong stability preserving for linear problems
under the stepsize restriction $\Dt \le \Rmin \DtFE$ where
\begin{align}
\Rmin & = \min_i R(\psi_i).
\end{align}
The constant $\Rmin$ is commonly referred to as the {\em threshold factor} \cite{spijker1983}.  
We also refer to the {\em optimal threshold factor}
\begin{align}
R_{s,k,p} = \sup \left\{\Rmin \ \ | \ \ (\psi_1,\dots,\psi_k)\in\Pi_{s,k,p}\right\}
\end{align}
where $\Pi_{s,k,p}$ denotes the set of all stability functions of $k$-step,
$s$-stage methods satisfying the order conditions up to order $p$.
Clearly the SSP coefficient of any $s$-stage, $k$-step, order $p$ MSRK method
is no greater than the corresponding $R_{s,k,p}$.  Optimal values of $R_{s,k,p}$
are given in \cite{ketcheson2008}.

The following result is proved in Section 2.3 of \cite{gottlieb2009}.
\begin{thm}
The threshold factor of a first-order accurate explicit $s$-stage general
linear method is at most $s$.
\end{thm}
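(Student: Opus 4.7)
The plan is to exploit the Taylor expansion of each $\psi_i$ about the left endpoint of its interval of absolute monotonicity and then combine the resulting inequality with the order-one accuracy conditions. Set $R := \Rmin$; we may assume $R > 0$, since otherwise the claim is trivial. Then $R(\psi_i) \ge R$ for every $i$, so by definition of the radius of absolute monotonicity, $\psi_i^{(j)}(-R) \ge 0$ for every $i$ and every $j \ge 0$.

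Since each $\psi_i$ has degree at most $s$, its Taylor series about $-R$ terminates and has nonnegative coefficients:
\begin{equation*}
\psi_i(z) = \sum_{j=0}^{s} a_{i,j}\,(z+R)^{j}, \qquad a_{i,j} = \frac{\psi_i^{(j)}(-R)}{j!} \ge 0.
\end{equation*}
Reading off the value and first derivative at $z=0$ gives $\psi_i(0) = \sum_{j=0}^{s} a_{i,j} R^{j}$ and $R\,\psi_i'(0) = \sum_{j=1}^{s} j\,a_{i,j} R^{j}$. Because $j \le s$ in every term of the second sum, we obtain the per-index estimate $R\,\psi_i'(0) \le s\,\psi_i(0)$, and summing over $i$ yields
\begin{equation*}
R \sum_{i=1}^{k} \psi_i'(0) \;\le\; s \sum_{i=1}^{k} \psi_i(0).
\end{equation*}

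To convert this into $R \le s$, I would next feed in the order-one accuracy conditions, which I would read off by applying the method to $u' = \lambda u$ with exact solution $u^n = e^{nz}u^0$. Expanding $\sum_i \psi_i(z)e^{(1-i)z} = e^z + O(z^2)$ at $z=0$ gives the consistency identity $\sum_i \psi_i(0) = 1$ together with $\sum_i \psi_i'(0) = 1 + \sum_i (i-1)\,\psi_i(0)$. Nonnegativity of each $\psi_i(0)$ (the $j=0$ instance of absolute monotonicity) combined with $i-1 \ge 0$ forces $\sum_i \psi_i'(0) \ge 1$, so
\begin{equation*}
R \;\le\; R \sum_i \psi_i'(0) \;\le\; s \sum_i \psi_i(0) \;=\; s.
\end{equation*}

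The argument is essentially the classical single-polynomial Kraaijevanger bound, lifted to the multistep setting by a sum over $i$. The only mildly delicate point is verifying that the \emph{multistep correction} $\sum_i (i-1)\,\psi_i(0)$ appearing in the first-order condition has the \emph{right} sign, so that the extra freedom afforded by past steps does not enlarge the admissible threshold factor; I do not anticipate any further obstacle.
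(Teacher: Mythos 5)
Your proof is correct and complete: the nonnegative Taylor expansion of each $\psi_i$ about $z=-R$, the per-polynomial estimate $R\,\psi_i'(0)\le s\,\psi_i(0)$ (valid term-by-term since every degree $j$ satisfies $j\le s$), and the two first-order conditions combine exactly as you claim, with the multistep correction $\sum_i (i-1)\psi_i(0)$ indeed nonnegative because each $\psi_i(0)\ge 0$ follows from absolute monotonicity. The paper does not actually prove this theorem itself --- it cites Section 2.3 of an external reference --- but your argument is the standard one and rests on precisely the same nonnegative-coefficient representation $\psi_i=\sum_j \gamma_{ij}\left(1+\frac{z}{r}\right)^j$ combined with order conditions that the paper uses in its proof of the second-order bound, so it is essentially the same approach.
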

Methods consisting of $s$ iterated forward Euler steps achieve this bound
(with both threshold factor and SSP coefficient equal to $s$).
Clearly it provides an upper bound on the threshold factor and SSP coefficient
also for methods of higher order.  For second
order methods, a tighter bound is given in the next theorem.  We will see
in Section \ref{sec:optimal} that it is sharp, even over the smaller
class of MSRK methods.

\begin{thm} \label{thm:2nd}
For any $s \geq 0, k > 1$ the optimal threshold factor for explicit $s$-stage,
$k$-step, second order general linear methods is
\begin{equation} \label{rsk2}
R_{s,k,2}:=\frac{(k-2)s+\sqrt{(k-2)^2s^2+4s(s-1)(k-1)}}{2(k-1)}.
\end{equation}
\end{thm}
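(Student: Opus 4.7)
The plan is to pass to the absolute monotonicity formulation of the SSP constraint, reduce the maximization over $\Pi_{s,k,2}$ to a linear program parametrized by $r$, and identify an extremal family for which the quadratic in the theorem statement emerges.

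First I would expand each $\psi_i$ about $z=-r$. If $R(\psi_i)\ge r$, then $\psi_i(z)=\sum_{j=0}^s c_{i,j}(1+z/r)^j$ with $c_{i,j}=\psi_i^{(j)}(-r)\,r^j/j!\ge 0$, so the problem becomes a feasibility question for nonnegative $c_{i,j}$. Write $\sigma_i=\psi_i(0)=\sum_j c_{i,j}$, $\tau_i=\psi_i'(0)$, $\kappa_i=\psi_i''(0)$, $\mu_i=\psi_i(-r)=c_{i,0}$, and abbreviate $S=\sum_i(i-1)\sigma_i$, $V=\sum_i(i-1)^2\sigma_i$, $T_1=\sum_i(i-1)\tau_i$, $M=\sum_i\mu_i$. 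Matching $\sum_i\psi_i(z)e^{-(i-1)z}=e^z+O(z^3)$ through second order gives the three equalities $\sum_i\sigma_i=1$, $\sum_i\tau_i=1+S$, $\sum_i\kappa_i=1-V+2T_1$. The degree bound $\deg\psi_i\le s$ yields, for every $i$, the polynomial inequalities $r\tau_i\le s(\sigma_i-\mu_i)$ and $r\kappa_i\le(s-1)\tau_i$, each an equality exactly when only $c_{i,0}$ and $c_{i,s}$ are nonzero. Summing these over $i$ and substituting the order conditions produces the aggregate constraints
\[r(1+S)\le s(1-M),\qquad r(1-V+2T_1)\le(s-1)(1+S).\]

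Next I would conjecture that the extremum is attained when all mass is concentrated on $\psi_1$ and $\psi_k$, with $\psi_1(z)=\alpha_1(1+z/r)^s$ (so $\mu_1=0$) and $\psi_k\equiv\sigma_k$ (so $\tau_k=\kappa_k=0$). Under this choice $T_1=0$, $V=(k-1)S$, $M=\sigma_k$, and $S=(k-1)\sigma_k$, and the two aggregate inequalities simultaneously saturate. Eliminating $\sigma_k$ from the resulting pair of equalities yields the quadratic $(k-1)r^2-(k-2)sr-s(s-1)=0$, whose positive root is precisely the claimed $R_{s,k,2}$.

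The main obstacle is establishing that this configuration is actually optimal among all admissible tuples $(\psi_1,\dots,\psi_k)$, rather than merely a critical point of a relaxed system. I would handle this by treating the feasibility of the order conditions at a fixed $r$ as a linear program in the $c_{i,j}$ with three equality constraints, so any basic feasible solution has at most three positive entries. A Farkas-type dual certificate built from the active rows at $(i,j)\in\{(1,0),(1,s),(k,0)\}$ would then show that no $r>R_{s,k,2}$ admits a feasible solution; achievability at $r=R_{s,k,2}$ is confirmed by the explicit construction above.
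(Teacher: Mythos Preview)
Your plan is sound and arrives at the same extremal configuration as the paper, but the route to the upper bound is genuinely different. Both arguments begin with the absolute-monotonicity expansion $\psi_i(z)=\sum_j\gamma_{ij}(1+z/r)^j$, $\gamma_{ij}\ge0$. From there the paper does \emph{not} pass to LP duality: it multiplies the first-order condition by $kr$, subtracts the second-order condition, and obtains a single quadratic $a(\gamma)r^2+b(\gamma)r+c(\gamma)=0$ with $a(\gamma)=\sum\gamma_{ij}i(k-i)\ge0$ and $c(\gamma)=-\sum\gamma_{ij}j(j-1)\le0$. It then shows, by a short chain of explicit mass-moving transformations (zero out $\gamma_{kj}$, symmetrize $i\leftrightarrow k-i$, collapse onto $\gamma_{1s}$), that each step can only increase the positive root of this quadratic, and the terminal configuration $\gamma_{1s}=1$ gives exactly $R_{s,k,2}$. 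Your proposal instead treats the three order conditions as an LP in the $c_{i,j}$ and seeks a Farkas certificate for infeasibility when $r>R_{s,k,2}$. This is a legitimate and more structural approach, and it explains cleanly why the optimum is supported on at most three indices; however, the actual work you defer---verifying that the dual vector determined by tightness at $(1,s)$ and $(k,0)$ makes the column inequality $y_0+y_1(j+(k-i)r)+y_2((j+(k-i)r)^2-j)\ge0$ hold for \emph{every} $(i,j)$ in $\{1,\dots,k\}\times\{0,\dots,s\}$---is a nontrivial quadratic check over the whole grid, comparable in effort to the paper's transformation steps. One small correction: the optimal support is $\{(1,s),(k,0)\}$, not three points; the LP is degenerate at $r=R_{s,k,2}$ (two positive variables satisfying three equalities), which is exactly what pins down the threshold.
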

\begin{proof}
It is convenient to write the stability polynomials in the form
\begin{align}
    \psi_i = \sum_j \gamma_{ij} \left(1+\frac{z}{r}\right)^j
\end{align}
where we assume $r\in[0,\Rmin]$, which implies 
\begin{align} \label{gammapos}
\gamma_{ij}\ge0.
\end{align}
The conditions for second order accuracy are:
\begin{subequations} \label{2oc}
\begin{align}
& \sum\limits_{i=1}^{k}\sum\limits_{j=0}^s\gamma_{ij} = 1, \label{0thorder} \\
& \sum\limits_{i=1}^{k}\sum\limits_{j=0}^s\gamma_{ij}(j+(k-i)r) = kr, \label{1storder} \\
& \sum\limits_{i=1}^{k}\sum\limits_{j=0}^s\gamma_{ij}((k-i)^2r^2+2(k-i)jr+j(j-1)) = k^2r^2.  \label{2ndorder}
\end{align}
\end{subequations}
We will show that conditions \eqref{gammapos} and \eqref{2oc} cannot be
satisfied for $r$ greater than the claimed value \eqref{rsk2}, which we denoted 
in the rest of the proof simply by $R_2$.

By way of contradiction, suppose $r>R_2$.
Multiply \eqref{1storder} by $kr$ and subtract \eqref{2ndorder} from the result
to obtain
\begin{equation}
\sum\limits_{i=1}^{k}\sum\limits_{j=0}^s\gamma_{ij}(i(k-i)r^2-(k-2i)jr-j(j-1))=0.
\label{2ndorderb}
\end{equation}
Let us find the maximal root of this equation, which is an upper bound on $r$.
We introduce the following notation:
\begin{subequations}
\begin{align}
a(\gamma)&=+\sum\limits_{i=1}^{k}\sum\limits_{j=0}^s\gamma_{ij}i(k-i),\\
b(\gamma)&=-\sum\limits_{i=1}^{k}\sum\limits_{j=0}^s\gamma_{ij}(k-2i)j,\\
c(\gamma)&=-\sum\limits_{i=1}^{k}\sum\limits_{j=0}^s\gamma_{ij}j(j-1).
\end{align}
\end{subequations}

{\bf Case 1: $a(\gamma)=0$}.
In this case we have $\gamma_{ij}=0$ for all $i\neq k$, so \eqref{2ndorderb}
simplifies to
\begin{equation}
\sum\limits_{j=0}^s\gamma_{kj}j(kr-(j-1))=0.
\end{equation}
This implies that either $\gamma_{kj}=0$ for $j\ne0$ or that $r\le (s-1)/k$.
The first option fails to satisfy \eqref{1storder}, while the second
contradicts our assumption $r>R_2$.

{\bf Case 2: $a(\gamma)\ne0$}.
The largest root always exists due to the positivity of $a(\gamma)$ and the
nonpositivity of $c(\gamma)$, and it can be expressed as
\begin{equation}
r(\gamma)=\frac{-b(\gamma)}{2a(\gamma)}+\sqrt{\left(\frac{-b(\gamma)}{2a(\gamma)}\right)^2+\frac{-c(\gamma)}{a(\gamma)}},
\label{rmax}
\end{equation}
which simplifies to the desired $r=R_2$ in case 
\begin{align} \label{delta}
\gamma_{1s}=1,\quad\gamma_{ij}=0 \textrm{ for all } (i,j) \neq (1,s).
\end{align}
We now show that any positive coefficients $\gamma_{ij}$ can be transformed into
the choice \eqref{delta} without decreasing the largest root of \eqref{2ndorderb}.

Differentiating $r(\gamma)$ with respect to $\gamma_{kj}$ yields
\begin{eqnarray}
\frac{\partial}{\partial{\gamma_{kj}}} r(\gamma)&=&
\frac{-kj}{2a(\gamma)}+\frac{2b(\gamma)kj + 4a(\gamma)j(j-1)}{4a(\gamma)\sqrt{b(\gamma)^2-4a(\gamma)c(\gamma)}} \notag\\
&=& \frac{-r(\gamma)kj+j(j-1)}{\sqrt{b(\gamma)^2-4a(\gamma)c(\gamma)}},
\end{eqnarray}
which is non-positive by our assumption $r>R_2$.
Thus the largest root of \eqref{2ndorderb} will not decrease if we set
\begin{equation}
\gamma_{kj}:=0
\end{equation}
and then renormalize all the remaining $\gamma_{ij}$ so that \eqref{0thorder} holds.
Next we apply the transformation
\begin{subequations}
\begin{align}
\gamma_{ij} & :=\gamma_{ij}+\gamma_{k-i,j} &\textrm{ for all } 1 \leq i < \frac{k}{2},\\
\gamma_{ij} & :=0 &\textrm{ for all } \frac{k}{2} < i < k.
\end{align}
\end{subequations}
which leaves $a(\gamma)$ and $c(\gamma)$ invariant, ensures $b(\gamma)$ is nonpositive and
increases its absolute value, thus increases the largest root.
Now only negative terms contribute to $b(\gamma),c(\gamma)$ and only
positive terms contribute to $a(\gamma)$.  It follows that for fixed $(i,j) \neq (1,s)$ the transformation
\begin{subequations}
\begin{align}
\gamma_{1s} &:=\gamma_{1s} + \gamma_{ij},\\
\gamma_{ij} &:= 0
\end{align}
\end{subequations}
increases the largest root as it decreases the positive
$a(\gamma)$ and increases the absolute value of the nonpositive $b(\gamma),c(\gamma)$.
Applying the transformation for all $i,j$ we obtain \eqref{delta}.

We have shown that the claimed value is an upper bound on $R_2$.
This upper bound is achieved by taking
\begin{equation}
\gamma_{1s}=\frac{kR_2}{s-R_2+kR_2},\quad \gamma_{k0}=\frac{s-R_2}{s-R_2+kR_2}, 
\quad \gamma_{ij}=0 \textrm{ for all } (i,j) \notin \left\{(1,s),(k,0)\right\},
\end{equation}
which not only satisfy condition \eqref{2ndorderb} but also \eqref{gammapos} since $R_2 < s$.
\end{proof}


\section{Optimized explicit MSRK methods\label{sec:optimal}}
In this section we present an optimization problem for finding MSRK methods 
with the largest possible SSP coefficient. This optimization problem is
implemented in a MATLAB code and solved using the {\tt fmincon} function for optimization 
(code is available at our website \cite{sspsite}).
This implementation recovers the known optimal methods of first and second order
mentioned above.
For high order methods with large numbers of stages and steps, numerical
solution of the optimization
problem is difficult due to the number of coefficients and constraints. 
Despite the extensive numerical optimization searches,
we do not claim that all of the methods found are truly optimal; 
we refer to them only as {\em optimized}. 
Some of the higher-order methods {\em are} known to be optimal because
they achieve known upper bounds based on a relaxation of the optimization 
problem (presented in Section \ref{sec:bounds}) or on certified computations in
earlier work \cite{constantinescu2009}. 

In Section 4.2  we present the effective SSP coefficients of the optimized methods.
The coefficients $d_{il} ,  \hat{a}_{il} , a_{ij} ,  \theta_l , \hat{b}_{l} $ and  $b_j$ can be downloaded 
(as MATLAB files) from \cite{sspsite}.
The SSP coefficients of methods known to be optimal are printed in boldface in the corresponding
tables. The coefficients of methods that are known not to be optimal (e.g. when better methods
have been found in the literature) are printed in the table in a light grey. We chose to include these
to show the issues with the performance of the optimizer. We discuss these issues in
the relevant sections below.

A major issue in the implementation and the performance of the optimized time integrators is the choice
of starting methods to obtain the initial $k$ step values. Typically exact 
values are not available, and we recommend the use of 
many small steps of a lower order SSP method to generate the starting values. 
A discussion of starting procedures appears in \cite{tsrk}.

\subsection{The optimization problem}
Based on the results above, the problem of finding optimal SSP multistep Runge--Kutta
methods can be formulated algebraically.
We wish to find coefficients $\mS$ and $\mT$ (corresponding to
\eqref{msrk-spijker}) that maximize the value of $r$ subject to the following conditions:
\begin{enumerate}
\item $ (\mI+r \mT)^{-1} $ exists
\item $r (\mI+r\mT)^{-1} \mT \ge 0 $ and $ (\mI+r\mT)^{-1} \mS \ge 0$, where the inequalities are
 understood component-wise.
\item $\mS$ and $\mT$ satisfy the relevant order conditions.
\end{enumerate}  
This is a non-convex, nonlinear constrained optimization problem in many variables.
The second constraint above implies some useful bounds on the coefficients.
Extending Theorem 3 of \cite{tsrk}, one finds that if
method \eqref{eq:mrktypeII} has positive SSP coefficient then
\begin{subequations}
\begin{align}
\label{condition_i} \mzero & \le{\mD}\le \bbone, & 0 & \le\theta\le1, \\
\label{condition_iA} {\mA} & \ge\mzero, & {\mAh} & \ge\mzero, \\
\label{condition_ib} {\vb} & \ge\mzero  & {\bbh} & \ge\mzero .
\end{align}
\end{subequations}

This problem was used to formulate a MATLAB optimization code that uses
{\tt fmincon}. We ran this extensively, and when needed used methods with lower number of steps 
as starting values.  We note that for a large number of coefficient and constraints, this optimization
process was slow and seemed to get stuck in local minima.  

\vspace{-.2in}
\subsection{Effective SSP coefficients of the optimized methods}
We now discuss the optimized SSP coefficients among methods with prescribed
order, number of stages, and number of steps.  For a given order, the SSP coefficient is larger for methods with
more stages, and usually the effective SSP coefficient is also larger.
Comparing optimized SSP coefficients among classes of methods with the same
number of stages and order, but different number of steps, we see the following
behavior:
\begin{itemize}
    \item For methods of even order, the SSP coefficient increases monotonically with 
            $k$, and the marginal increase from $k$ to $k+1$ is smaller for larger $k$.
    \item For  methods of odd order up to five, for a large enough number of  stages
     there exists $k_0$ such that optimized methods never
            use more than $k_0$ steps (hence the optimized SSP coefficient remains the 
            same as the allowed number of steps is increased beyond $k_0$).  The value
            of $k_0$ depends on the order and number of stages. 
\end{itemize}
This behavior seems to generalize that seen for multistep methods \cite{lenferink1989}.
The behavior described for odd orders is observed here up to order five. 
Since the value of $k_0$ increases with $p$, we expect that
a study including larger $k$ values would show the same behavior for optimized methods of higher
(odd) order as well.
Overall, the effective SSP coefficient tends to increase more quickly with
the number of stages than with the number of steps.

Where relevant, we compare the methods we found to those of 
Constantinescu \cite{constantinescu2009}, Huang \cite{huang2009}, and Vaillancourt \cite{VaillancourtAPNUM2011, VaillancourtJSC2012, VaillancourtJCAM2014}.

\vspace{-.15in}
\subsubsection{Second-order methods} 
The second-order methods were first found by the numerical optimization procedure above.
We observed that the coefficients of the optimal second-order methods have a clear structure,
which we were 
then able to generalize and prove optimal in Theorem \ref{thm:2nd} above.

\begin{wraptable}[5]{r}{3.5in} \vspace{-.27in}
\caption{$\ceff$ for second-order methods}  \label{tab:2ord}
\begin{tabular}{|c|c|c|c|c|} \hline 
$s \backslash k$ &2&3&4&5\\ \hline
2 & {\bf 0.70711} & {\bf 0.80902} & {\bf 0.86038} & {\bf 0.89039}\\ \hline
3 & {\bf 0.81650} & {\bf 0.87915} & {\bf 0.91068} & {\bf 0.92934}\\ \hline
4 & {\bf 0.86603} & {\bf 0.91144} & {\bf 0.93426} & {\bf 0.94782}\\ \hline
5 & {\bf 0.89443} & {\bf 0.93007} & {\bf 0.94797} & {\bf 0.95863}\\ \hline
6 & {\bf 0.91287} & {\bf 0.94222} & {\bf 0.95694} & {\bf 0.96573}\\ \hline
7 & {\bf 0.92582} & {\bf 0.95076} & {\bf 0.96327} & {\bf 0.97074}\\ \hline
8 & {\bf 0.93541} & {\bf 0.95711} & {\bf 0.96798} & {\bf 0.97448}\\ \hline
\end{tabular} 
\end{wraptable}
Let $Q = 2(k-1)R_{s,k,2}$.
The non-zero coefficients of these methods are:
\begin{align*}
d_{ik} & = 1 & 1\le i \le s, \\
    b_j = \beta & := \frac{kQ}{s(k-1)\left(2(s-1)+Q \right)} & 1\le j \le s, \\
    a_{ij} = & = \frac{1}{R_{s,k,2}} & 1 \le j < i \le s \\
    \theta_k & = \frac{k - \beta s}{k-1} & \theta_1 = 1-\theta_k.
\end{align*}

These methods have  $\sspcoef = R_{s,k,2}$, which is proven optimal in Theorem \ref{thm:2nd} above.
In Table \ref{tab:2ord} these values appear for $s=2, . . ., 8$ and $k=2, . . .,5$.
While the second-order methods are not so useful from a practical point of view, as many good low-order SSP methods
are known, they are of great interest because the
the optimal SSP coefficient among 2nd-order methods with $k$ steps and $s$ stages is
an upper bound on the SSP coefficient for higher-order methods with the same values of $k$ and $s$.


\begin{wraptable}[10]{r}{3.5in} 
\vspace{-.65in}
\caption{$\ceff$ for third-order methods} 
 \label{tab:3ord}
\begin{tabular}{|l|l|l|l|l|} \hline
$s\backslash k$ &2&3&4&5\\\hline
2 & {\bf 0.36603} & {\bf 0.55643} & {\bf 0.57475} & 0.57475\\ \hline
3 & {\bf 0.55019} & {\bf 0.57834} & {\bf 0.57834} & 0.57834\\ \hline
4 & {\bf 0.57567} & {\bf 0.57567} & {\bf 0.57567} & 0.57567\\ \hline
5&0.59758&0.59758&0.59758&0.59758\\ \hline
6&0.62946&0.62946&0.62946&0.62946\\ \hline
7&0.64051&0.64051&0.64051&0.64051\\ \hline
8&0.65284&0.65284&0.65284&0.65284\\ \hline
9&0.67220&0.67220&0.67220&0.67220\\ \hline
10&0.68274&0.68274&0.68274&0.68274\\ \hline
\end{tabular}
\end{wraptable}
\subsubsection{Third-order methods}
The effective SSP coefficients of optimized third-order methods are shown in Table \ref{tab:3ord}
and plotted in Figure \ref{fig:3rdO}.
All methods with four or more stages turn out to be two-step methods
(i.e., $k_0=2$ for this case).
For $s=3$, there is no advantage to increasing the number of steps beyond $k_0=3$,
and for $s=2$, $k_0=4$. Note that although we report only values up
to five steps, this pattern was verified up to $k=8$. 
All methods up to $k=4, s=4$ are optimal (to two decimal places)
according to results of \cite{constantinescu2009}, and the $\ceff$ values for 
$(s,k)=(2,2),(3,2),(2,3)$ are provably optimal because they
achieve the optimal value $R_{s,k,3}$, as described above.

\vspace{-.15in}
\subsubsection{Fourth-order methods} \vspace{-.1in}
Effective coefficients are given in Figure \ref{fig:4thO} and 
Table \ref{tab:4ord}.
All methods up to $k=4, s=4$ are optimal (to two decimal places)
according to the certified optimization performed in \cite{constantinescu2009}. 
The $(2,5,4)$ method we found has an SSP coefficient that matches that of 
\cite{huang2009}.

\vspace{-.12in}
\begin{table}[hb]
\begin{minipage}{3.5in}
\centering
\caption{$\ceff$ for fourth-order methods}  \label{tab:4ord}
\begin{tabular}{|l|l|l|l|l|} \hline
$s\backslash k$ &2&3&4&5\\\hline
2 & {\bf --      } & {\bf 0.24767} & {\bf 0.34085} & 0.39640\\\hline
3 & {\bf 0.28628} & {\bf 0.38794} & {\bf 0.45515} & 0.48741\\ \hline
4 & {\bf 0.39816} & {\bf 0.46087} & {\bf 0.48318} & 0.49478\\ \hline
5&0.47209&0.50419&0.50905&0.51221\\ \hline
6&0.50932&0.51214&0.51425&0.51550\\ \hline
7&0.53436&0.53552&0.53610&0.53646\\ \hline
8&0.56151&0.56250&0.56317&0.56362\\ \hline
9&0.58561&0.58690&0.58871&0.58927\\ \hline
10&0.61039&0.61415&0.61486&0.61532\\ \hline
\end{tabular}
\end{minipage}
\begin{minipage}{3.5in}
\centering
\caption{$\ceff$ for fifth-order methods} \label{tab:5ord}
\begin{tabular}{|l|l|l|l|l|} \hline
$s\backslash k$ &2&3&4&5\\\hline
2&--&--&0.18556&0.26143\\\hline
3&--&0.21267&0.33364&0.38735\\\hline
4&0.21354&0.34158&0.38436&0.39067\\\hline
5&0.32962&0.38524&0.40054&0.40461\\\hline
6&0.38489&0.40386&0.40456&0.40456\\\hline
7&0.41826&0.42619&0.42619&0.42619\\\hline
8&0.44743&0.44743&0.44743&0.44743\\\hline
9&0.43794&0.43806&0.43806&0.43806\\\hline
10&0.42544&0.43056&0.43098&0.43098\\\hline
\end{tabular}
\end{minipage}
\end{table}

\vspace*{-.1in}

\begin{figure}[p] \label{fig:OptimalCeff}
\subfigure[Third-order methods\label{fig:3rdO}]{
\includegraphics[width=0.4\textwidth]{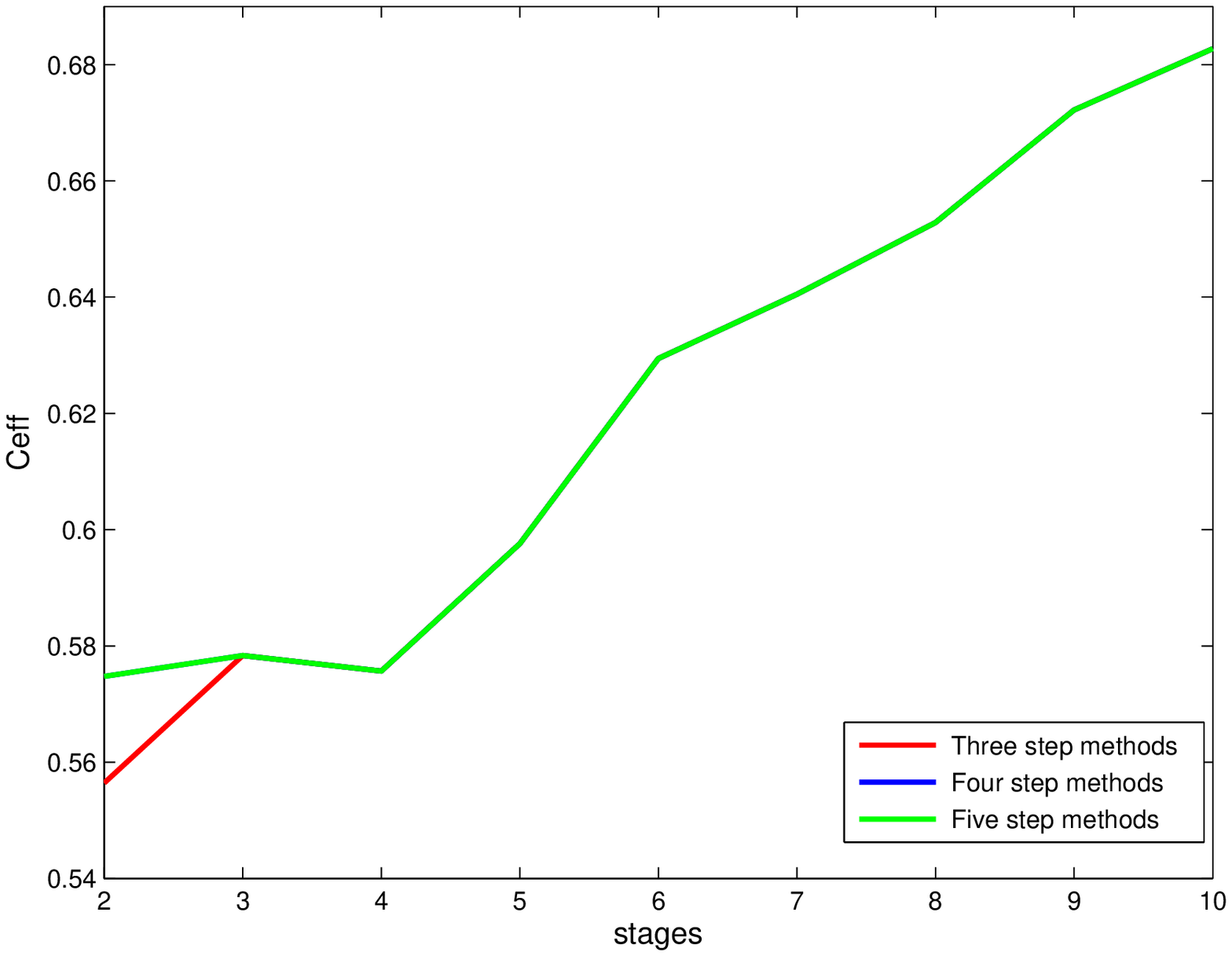}}
\subfigure[Fourth-order methods\label{fig:4thO}]{
\includegraphics[width=0.4\textwidth]{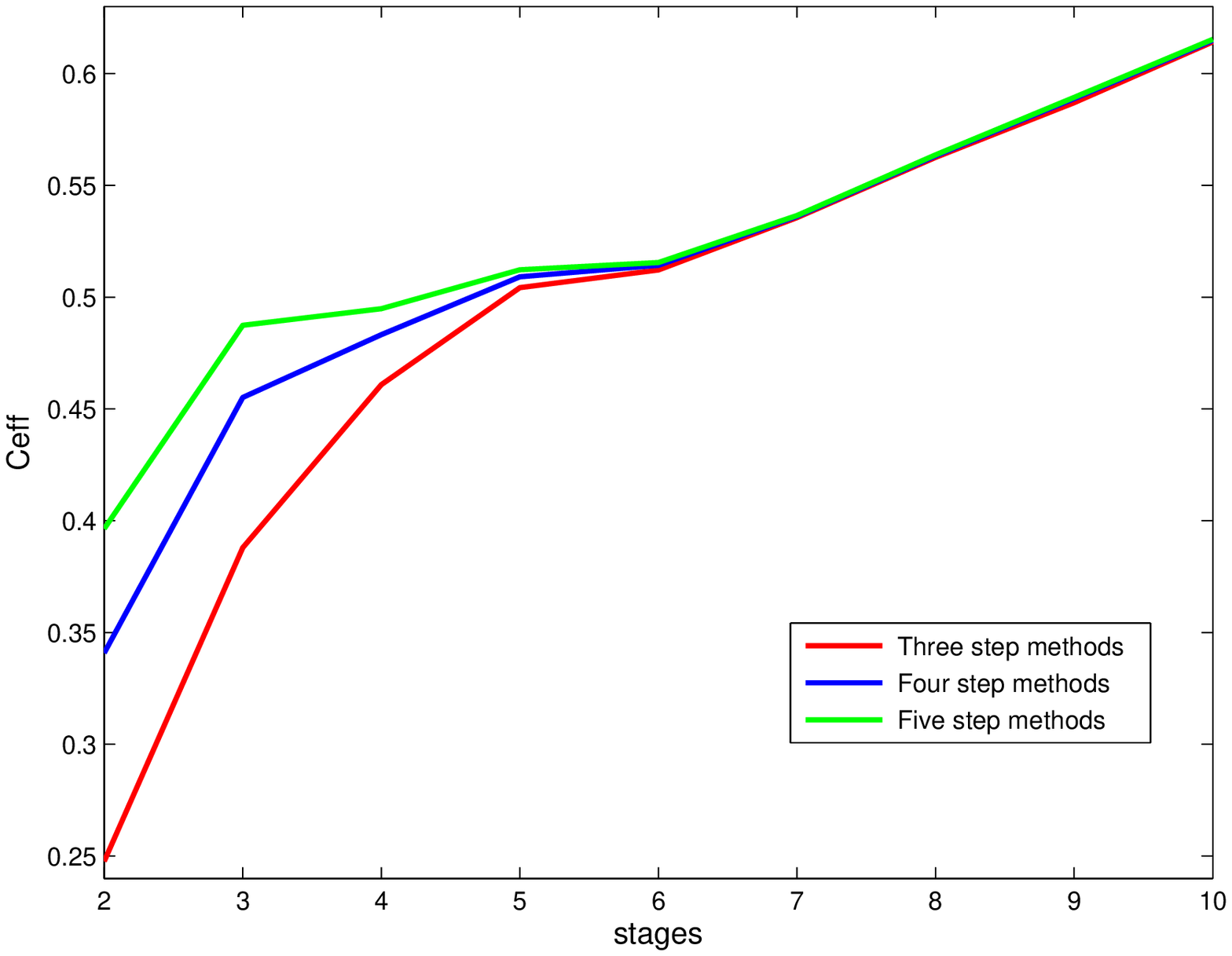}}
\subfigure[Fifth-order methods\label{fig:5thO}]{
\includegraphics[width=0.4\textwidth]{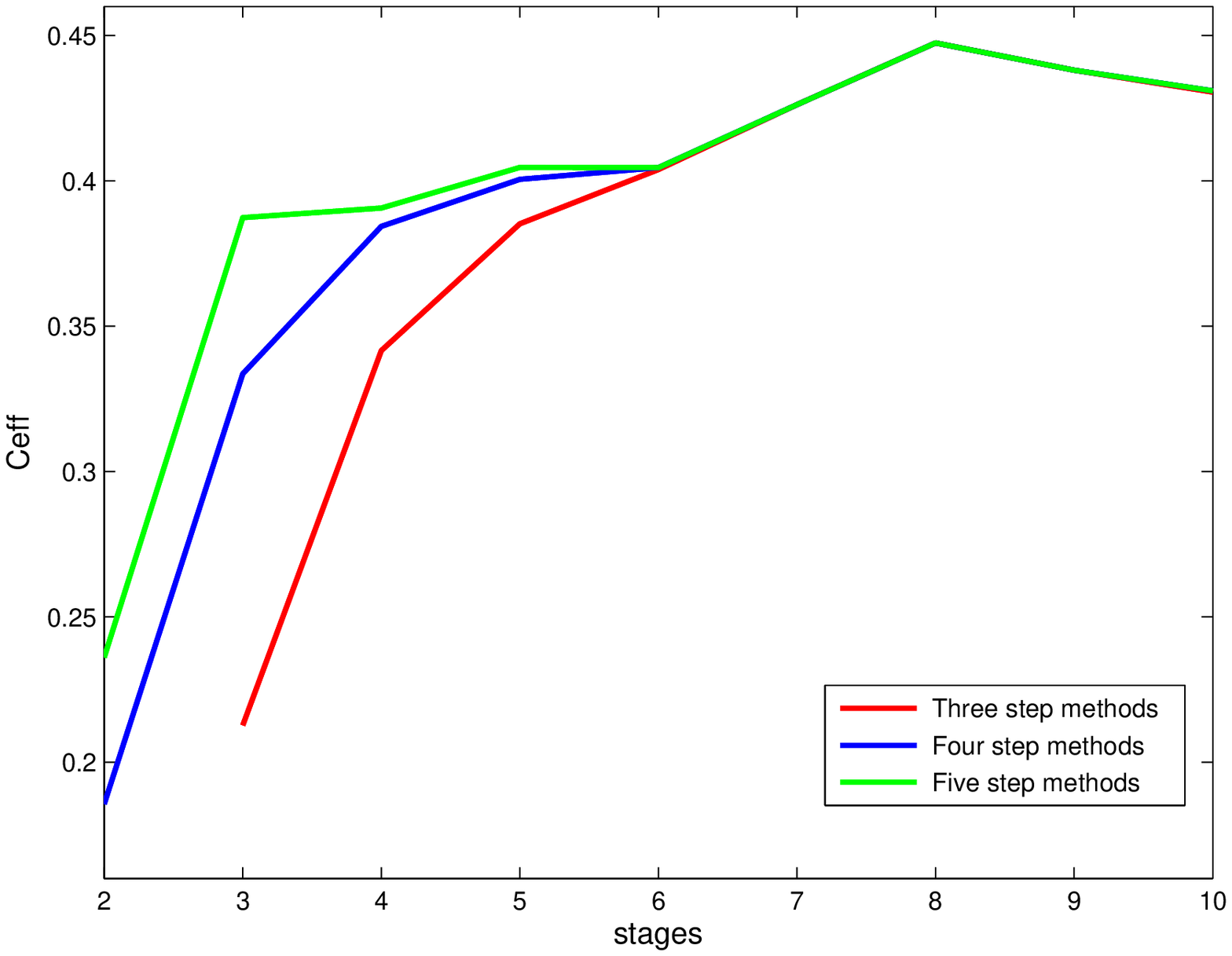}  }
\subfigure[Sixth-order methods\label{fig:6thO}]{
\includegraphics[width=0.4\textwidth]{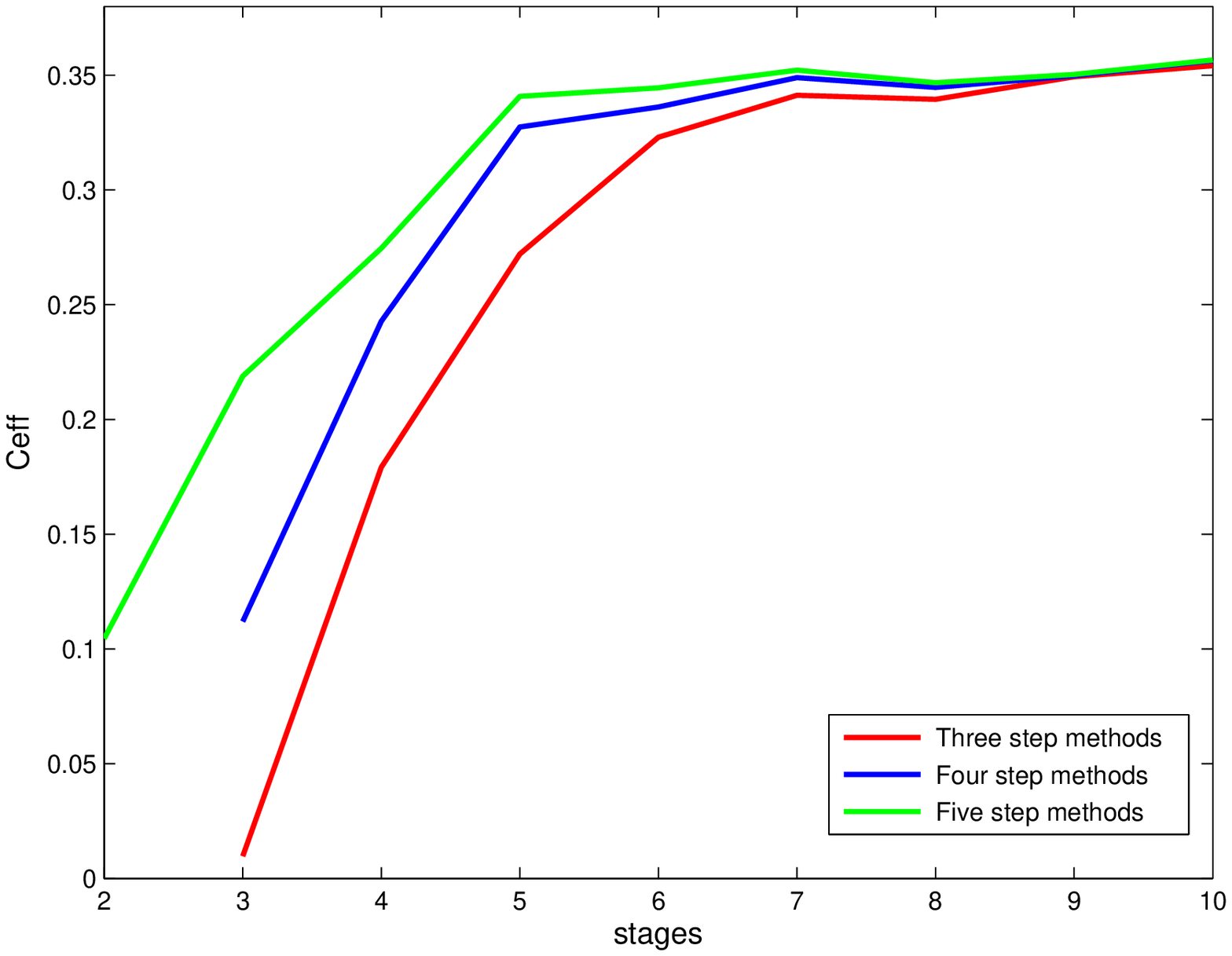}  }
\subfigure[Seventh-order methods\label{fig:7thO}]{
\includegraphics[width=0.4\textwidth]{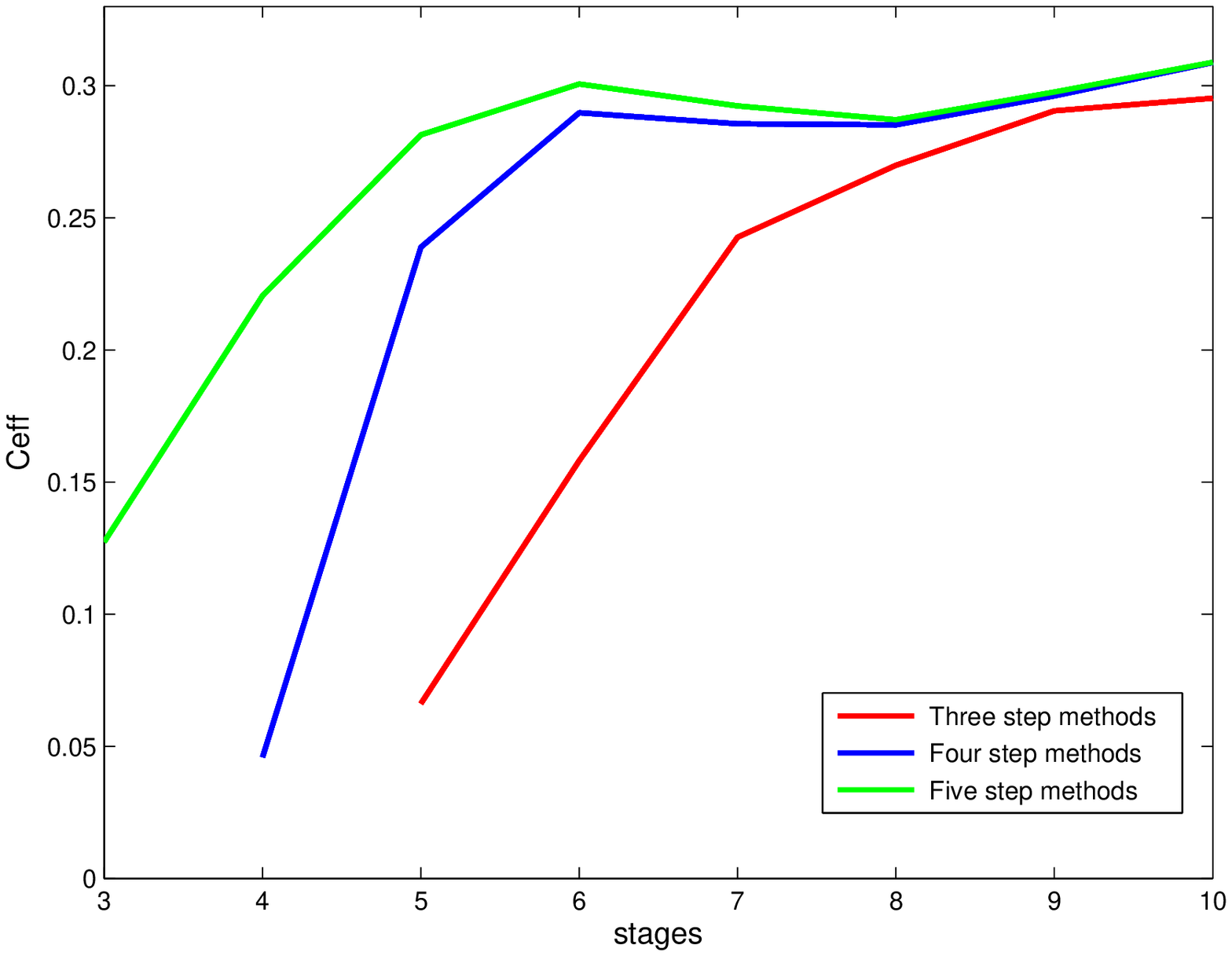}  }
\subfigure[Eighth-order methods\label{fig:8thO}]{\hspace{1.2in}
\includegraphics[width=0.4\textwidth]{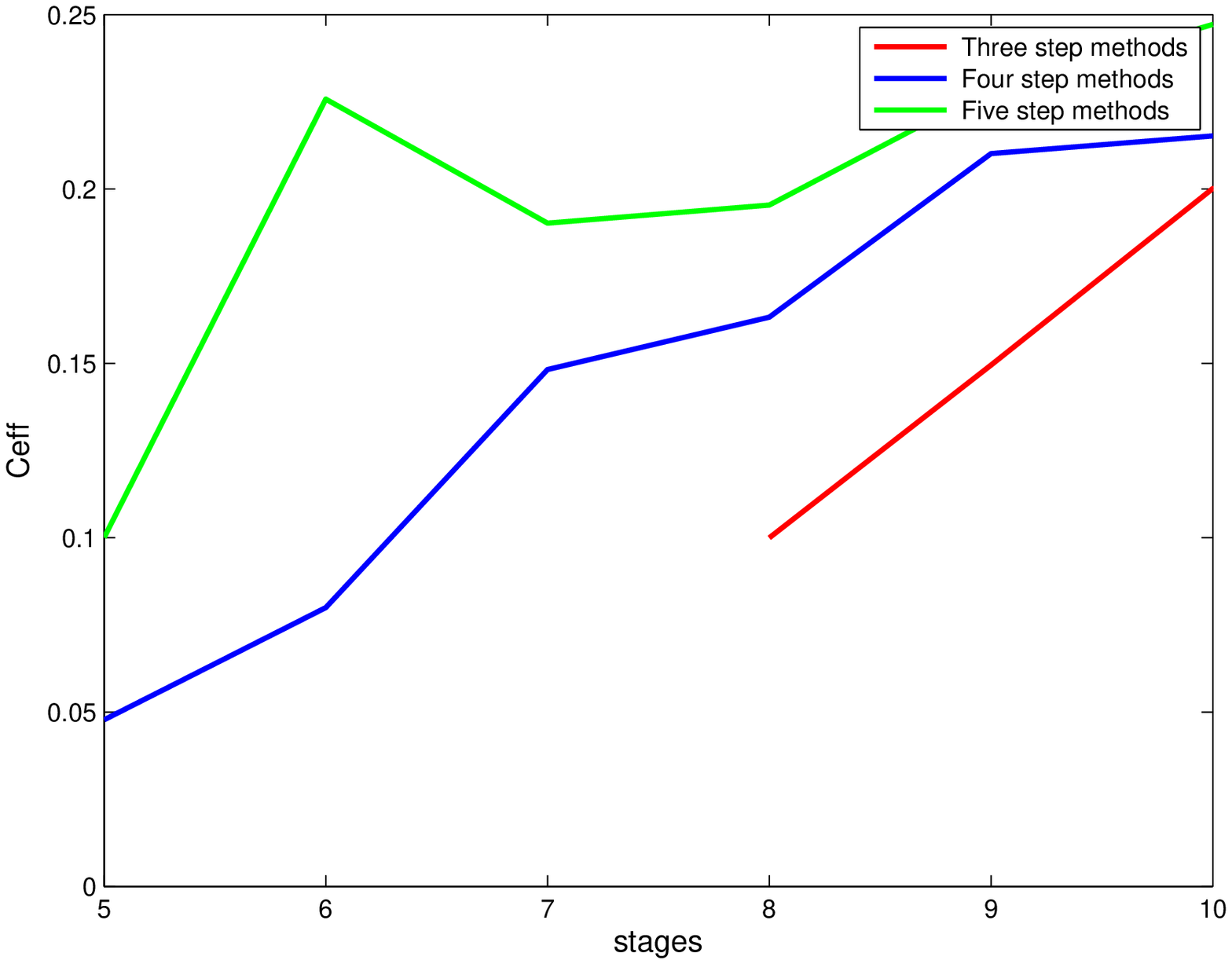}  }
\caption{Effective SSP coefficients of optimized methods.}
\end{figure}

\vspace*{-.2in}

\subsubsection{Fifth-order methods} The effective SSP coefficients
of the fifth-order methods are displayed in Figure \ref{fig:5thO} and 
Table \ref{tab:5ord}.
Although the optimized SSP coefficient is a strictly increasing function of the
number of stages, in some cases the effective SSP coefficient decreases. 
Our
 $(s,k)=(2,4)$ and $(s,k)=(2,5)$ methods have
effective SSP coefficients that match the ones in  \cite{huang2009}.
Our $(s,k)=(8,2), (3,3), (3,4),(3,5),$ and $(7,3)$ methods have effective SSP coefficient that match
those in  \cite{VaillancourtJCAM2014,VaillancourtAPNUM2011,VaillancourtJSC2012}.

\begin{wraptable}[9]{r}{3.5in} \vspace{-.8in}
\caption{$\ceff$ for sixth-order methods}
 \label{tab:6ord}
\begin{tabular}{|l|l|l|l|l|}
\hline
$s\backslash k$ &2&3&4&5\\\hline
2&--&--&--&  0.10451\\ \hline
3&--&0.00971& {\color{gray} 0.11192} &{\color{gray} 0.21889}\\\hline
4&--&0.17924&0.27118&0.31639\\\hline
5&--&0.27216&0.32746&0.34142\\\hline
6&0.09928&0.32302&0.33623&0.34453\\\hline
7&0.18171&0.34129&0.34899&0.35226\\\hline
8&0.24230&0.33951&0.34470&0.34680\\\hline
9&0.28696&0.34937&0.34977&0.35033\\\hline
10&0.31992&0.35422&0.35643&0.35665\\\hline
\end{tabular}
\end{wraptable}

\vspace*{-.1in}
\subsubsection{Sixth-order methods} 
Effective SSP coefficients of optimized sixth-order methods are given in Figure
\ref{fig:6thO} and Table \ref{tab:6ord}.
Once again, the effective SSP coefficient occasionally decreases with increasing stage number.
Our $(s,k)=(2,5)$ method  has an effective SSP coefficient that matches the one in 
\cite{huang2009}, and our values for $(s,k)= (8,3), (8,4),$ and $(8,5)$ improve upon the values obtained 
in \cite{VaillancourtJCAM2014}. Our values for $(s,k)=(7,3), (7,4)$ match those of 
\cite{VaillancourtJSC2012} and our $(s,k)=(7,5)$ value
improves on that in \cite{VaillancourtJSC2012}. The $(s,k)=(3,4), (3,5)$ values illustrate the challenges in
using our general numerical optimization formulation for this problem: we were not able to match the methods in \cite{VaillancourtAPNUM2011}
from a "cold start" (with random initial guesses). However, converting their methods to our form we were able to replicate their results while tightening the optimizer parameters {\tt TolCon,
TolFun} and {\tt TolX} from $10^{-12}$ in their work to $10^{-14}$.
This suggests that  the approach used in \cite{VaillancourtAPNUM2011} which focuses on one set of parameters
at a time may make the optimization problem more manageable. However, this same approach was used in
 \cite{VaillancourtJSC2012} and led to a  $(s,k)=(7,5)$ method that had a smaller SSP coefficient
than that found with our approach. 


\subsubsection{Seventh order methods} The effective SSP coefficients
for the seventh order case show consistent increase as both the 
steps and stages increase. There is more benefit to increasing 
stages rather than steps once the number of steps is large enough,
though for small $k$ relative to $s$ an an increase in steps is preferable.
The behavior of the effective SSP coefficient is also summarized in
Figure \ref{fig:7thO} and Table \ref{tab:7ord}. Compared to the seven-step two-stage method in
\cite{huang2009}, which has $\sspcoef=0.234$ and $\ceff = 0.117$, our five step methods
with $s\geq 3$, four step with $k\geq 5$,  three step with $k \geq 6$ and two step 
with $k \geq 9$ all have larger effective SSP coefficient. Our $(7,4), (7,5), (3,5)$ methods
have SSP coefficients that match those in \cite{VaillancourtJSC2012} and \cite{VaillancourtAPNUM2011},
while our $(7,3)$ and $(8,3), (8,4),(8,5)$ have larger SSP coefficients that those in  \cite{VaillancourtJSC2012}
and \cite{VaillancourtJCAM2014}.

\vspace{-.25in}
\begin{table}[ht]
\begin{minipage}{3.5in}
\centering
\caption{$\ceff$ for seventh order methods}  \label{tab:7ord}
\begin{tabular}{|l|l|l|l|l|}
\hline
\multicolumn{5}{|c|}{$\ceff$ for seventh order methods} \\ \hline
$s\backslash k$ &2&3&4&5\\\hline
2&--&--&--&--\\\hline
3&--&--&--&0.12735\\\hline
4&--&--&0.04584&0.22049\\\hline
5&--&0.06611&0.23887&0.28137\\\hline
6&--&0.15811&0.28980&0.30063\\\hline
7&--&0.24269&0.28562&0.29235\\\hline
8&--&0.26988&0.28517&0.28715\\\hline
9&0.12444&0.29046&0.29616&0.29759\\\hline
10&0.17857&0.29522&0.30876&0.30886\\\hline
\end{tabular}
\end{minipage}
\begin{minipage}{3.5in}
\centering
\caption{$\ceff$ for eighth order methods} \label{tab:8ord}
\begin{tabular}{|l|l|l|l|l|}
\hline
\multicolumn{5}{|c|}{$\ceff$ for eighth order methods} \\ \hline
$s\backslash k$ &2&3&4&5\\\hline
2&--&--&--&--\\\hline
3&--&--&--&--\\\hline
4&--&--&--&--\\\hline
5&--&--&0.04781&0.10007\\\hline
6&--&--&0.07991&0.22574\\\hline
7&--&--&{\color{gray} 0.14818}&{\color{gray} 0.22229}\\\hline
8&--&{\color{gray} 0.09992}&{\color{gray} 0.16323}&{\color{gray} 0.19538}\\\hline
9&--&0.14948&0.21012&0.23826\\\hline
10&--&0.20012&0.21517&0.24719\\\hline
\end{tabular}
\end{minipage}
\end{table}

\vspace{-.25in}

\subsubsection{Eighth order methods} Explicit eighth order two-step RK methods found in
\cite{tsrk} require at least 11 stages and have $\ceff\le 0.078$.
Much larger values of $\ceff$ can be achieved with fewer stages by using additional 
steps, as shown in Figure \ref{fig:8thO} and Table \ref{tab:8ord}.  The best method
has $\ceff \approx 0.247$; to achieve the same efficiency with a linear multistep method
requires the use of more than thirty steps~\cite{ketcheson2009a}.
Once again, due to the number of coefficients and constraints this was a difficult optimization problem and
we were not able to converge to the best methods from a "cold start". This is evident in our $(s,k)=(7,4),
(7,5), (8,3), (8,4), (8,5)$ methods which have a smaller SSP coefficient than those in 
\cite{VaillancourtJSC2012,VaillancourtJCAM2014}.
However, converting the methods in \cite{nguyenthesis} to our form we were able 
to replicate their results while tightening the optimizer parameters {\tt TolCon, TolFun} 
and {\tt TolX} from $10^{-12}$ in their work to $10^{-14}$.

\subsubsection{Ninth order methods} 
Explicit two-step RK methods with positive SSP coefficient and order nine cannot exist~\cite{tsrk}.
For orders higher than eight, finding practical multistep or Runge--Kutta methods
is a challenge even when the SSP property is not required. Numerical optimization of 
such high order MSRK methods is computationally intensive, so we have restricted our
search to a few combinations of stage and step number. We are able to break the order barrier
of the two step methods by finding a $(s,k)=(10,3)$ method. 
Investigating methods with four steps, we obtain
a $(s,k)= (8,4)$ method with $\ceff=0.1276$, and a  $(s,k)= (9,4)$ method with $\ceff = 0.1766$.
We also found a $(9,5)$ method with $\ceff=0.1883$. By comparison, a multistep method requires
23 steps for $\ceff=0.116$ and 28 steps for $\ceff=0.175$. 
These methods also compare favorably to the $(s,k)=(8,5)$
method in \cite{nguyenthesis}, that has $\ceff=0.153$.  However, based on our experience with eighth order
methods we do not claim that our new methods are optimal.

\subsubsection{Tenth order methods} The search for tenth order methods is complicated by the large
number of constraints and the large number of steps and stages required, and so we did not pursue
optimization of these methods in general. However, we obtained an $(s,k)= (20,3)$ with $\ceff= 0.0917$ 
which demonstrates that  3-step methods with order 10  exist. We also obtained a $(k,s)=(8,6)$ method
with $\ceff=0.839$. Once again, we do not claim that this methods are optimal.
While these method  have  small effective coefficients, they demonstrate that it is possible to find
tenth order SSP methods with much less than the  22 steps required for linear multistep
methods. For comparison, the optimal multistep method with 22 steps and order 10 has
$\ceff=0.10$ \cite{SSPbook2011}.

\section{Numerical Results\label{sec:test}}
\vspace{-.15in}
In this section we present numerical tests of the optimized MSRK methods identified above.
The numerical tests have three purposes: (1) to verify that the methods have the designed order of accuracy;
(2) to demonstrate the value of high order time-stepping methods when using high-order spatial discretizations;
and (3) to study the strong stability properties of the newly designed MSRK methods in practice, on test cases
for which the forward Euler method is known to be total variation diminishing or 
positivity preserving.
The scripts for many of these tests can be found at \cite{testsuite}.

\vspace{-.15in}

\subsection{Order Verification}
\vspace{-.1in}

Convergence studies for ordinary differential equations were performed using the  van der Pol 
oscillator, a nonlinear system, to  confirm the design orders of the methods. As these methods 
were designed for use as time integrators for partial differential equations, we include a
convergence study for a PDE with high order spatial discretization.



\begin{figure}[b!]
\subfigure[van der Pol]{\includegraphics[scale=.35]{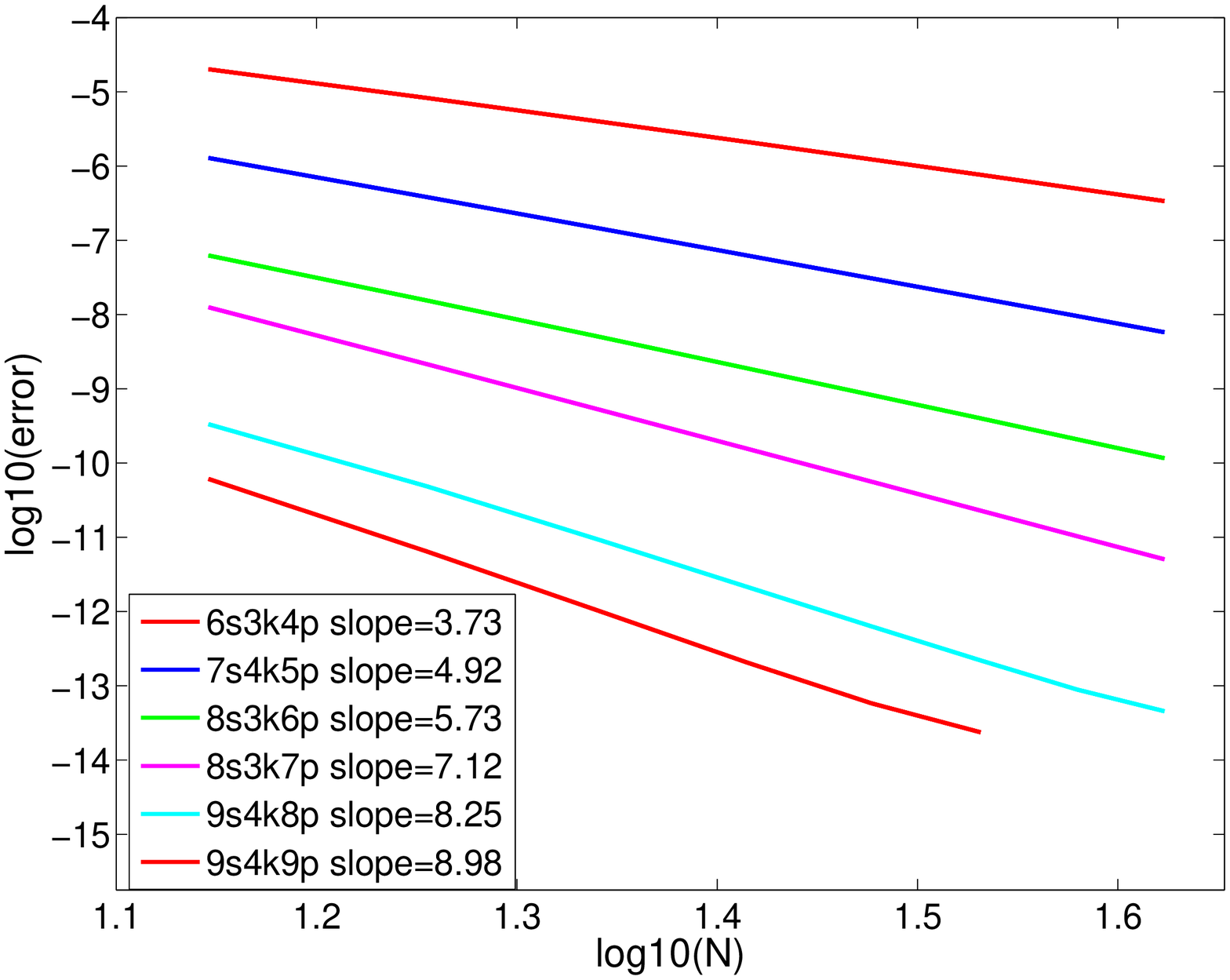}}
\subfigure[linear advection\label{fig:conv-adv}]{\includegraphics[scale=.382]{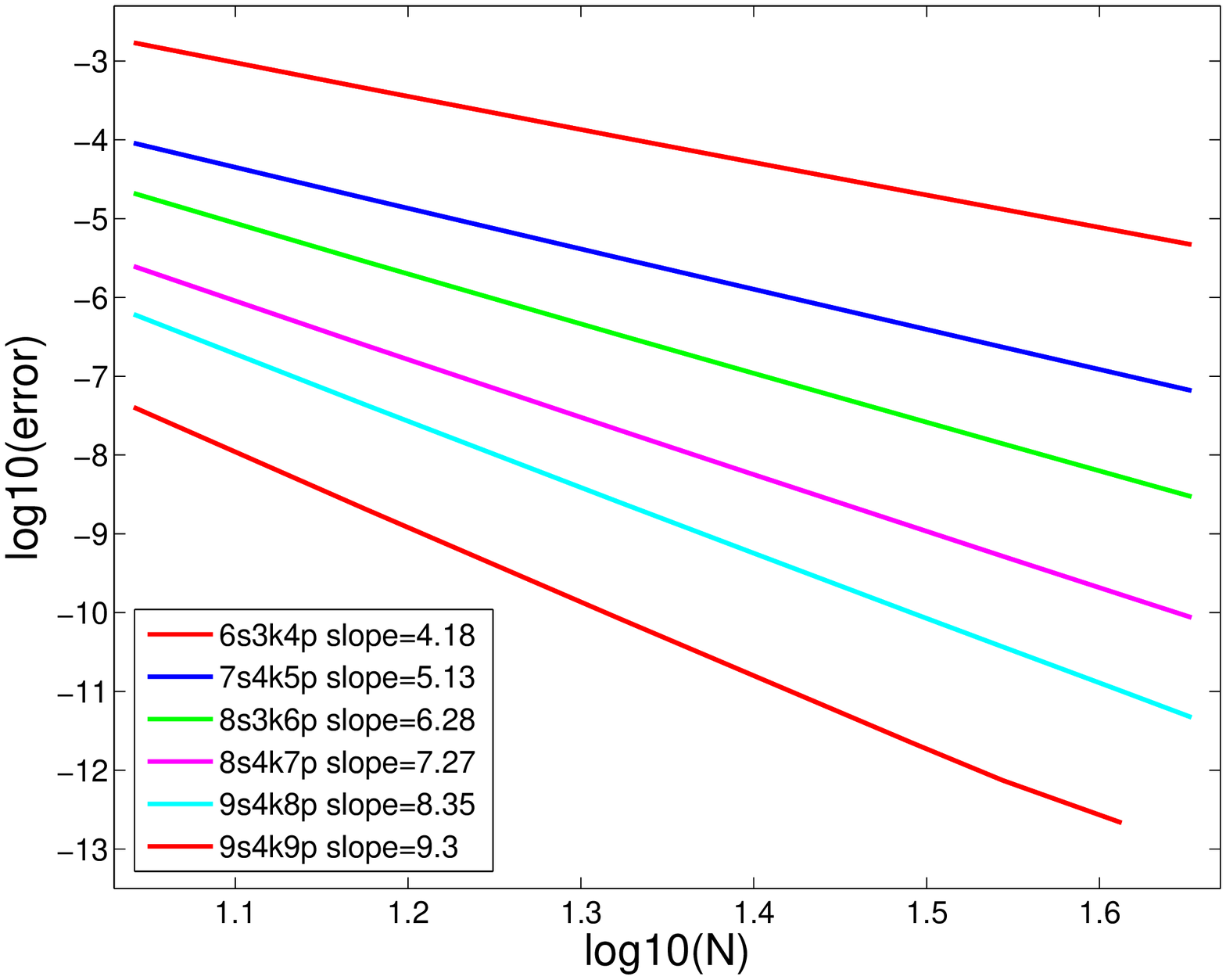}}
\caption{Order verification of multistep Runge--Kutta methods on ordinary 
differential equations (left) and partial differential equation (right). } \label{fig:conv}
\end{figure}   

\noindent{\bf The van der Pol oscillator problem.}  
The van der Pol problem is:
\begin{eqnarray}
&u_1' = u_2 \\ 
&u_2' = \frac{1}{\epsilon} (-u_1 + (1-u_1^2) u_2)
\end{eqnarray}
We use $\epsilon = 10$ and initial conditions $u_0 = (0.5;  0)$. 
This was run to final time $T_{final} =  4.0$, with $\Delta t = \frac{T_{final}}{N-1} $ where 
$N=15,19,23,27,31,35,39,43$.
Starting values and exact solution (for error calculation)  were calculated by  the highly 
accurate MATLAB ODE45 routine with tolerances set to $10^{-14}$. 
 In Figure \ref{fig:conv}(a)  we show the convergence of the selected  
 $(s,k,p)=(6,3,4), (7,4,5), (8,3,6), (8,3,7), (9,4,8), $ and $(9,4,9)$ methods
for $u_1$.
Due to space limitations, we present only the results for a few methods, 
one of each order up to $p=9$. The new multistep Runge--Kutta  methods exhibit the correct order of accuracy.


\noindent{\bf Linear advection with a Fourier spectral method.} 
For the PDE convergence test, we chose the Fourier spectral method on the advection equation with sine wave  initial conditions and 
periodic boundaries:    \vspace{-.2in}
\begin{eqnarray}
u_t & = & - u_x  \; \; \; x \in [0,1]  \\ \nonumber
u(0,x)& =& \sin(4\pi x) \; \; \; \; u(t,0)=u(t,1)  \nonumber
\end{eqnarray}
The exact solution to this problem is a sine wave with period $4$ that travels in time. Due to the periodicity of the exact solution,
the Fourier spectral method gives us an exact solution in space \cite{HGG2007} once we have two points per wavelength, allowing 
us to isolate the effect of the  temporal discretization on the error. We run this problem with $N=(11,15,21,25,31,35,41,45)$ 
to $T_{final}=1$ with $\Delta t = 0.4 \Delta x$, where $\Delta x = \frac{1}{N-1}$. 
For each multi-step Runge--Kutta method of order $p$ we generated the  $k-1$ initial values using 
the third order Shu-Osher SSP Runge--Kutta method with
a very small time-step  $\Delta t^{p/3}$. 
Errors are computed at the final time, compared to the exact solution. 
Figure \ref{fig:conv-adv} contains the $l_2$ norm of the errors,
and demonstrates that the methods achieved the expected convergence rates.

   \vspace{-.25in}
\subsection{Benefits of high order time discretizations}
   \vspace{-.1in}
\begin{wrapfigure}[17]{r}{3.6in} \vspace{-.27in}
\includegraphics[scale=.5]{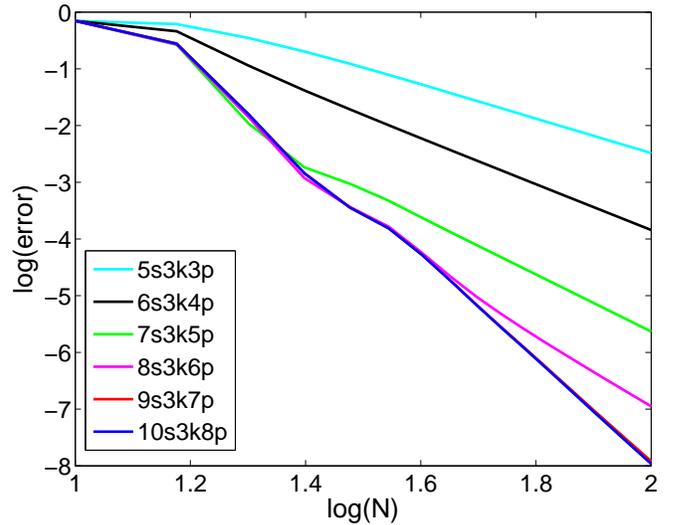} 
\caption{\small Convergence of a 2D advection equation with $9$th order WENO in space and 
MSRK in time.}
 \label{WENO9}
\end{wrapfigure}
High-order spatial discretizations for hyperbolic PDEs have usually been paired with 
lower-order time 
discretizations; e.g. \cite{carrillo2003,cheng2003,cheruvu2007,enright2002,feng2004,jin2005,
labrunie2004,peng1999,tanguay2003}. 
Although spatial truncation errors are often observed to be larger than temporal errors in practice,
this discrepancy can lead to loss of accuracy 
unless the time-step is
significantly reduced. If the order of the time-stepping method is $p_1$ 
and the order of the spatial method is $p_2$ then asymptotic convergence at rate $p_2$ is assured only if
$\Delta t = {\mathcal O}(\Delta x^{p_2/p_1})$.  For hyperbolic PDEs, one typically wishes to take 
$\Delta t = {\mathcal O}(\Delta x)$ for accuracy reasons.


In the following example we solve the two-dimensional advection equation
\begin{align*}
    u_t + u_x + u_y & = 0
\end{align*}
over the unit square with periodic boundary conditions in each direction
and initial data $u(0,x) = \sin(2\pi( x+y))$. We take $\Delta x = \Delta y = \frac{1}{N-1}$. 
We solve for $0\le t \le \frac{1}{8}$ 
with $\Delta t = \frac{1}{4} \Delta x$. We use ninth-order WENO finite differences in space. 
For each multi-step Runge--Kutta method of order $p$ we generated the  $k-1$ initial values using 
the third order Shu-Osher SSP Runge--Kutta method with
a very small time-step  $\Delta t^{p/3}$.
Figure \ref{WENO9} shows the accuracy of several of our high order multistep Runge--Kutta methods 
applied to this problem.
Observe  that  while methods of order $p \leq 6$ exhibit an asymptotic convergence rate of
less than 9th order,
our newly found methods of order $p\geq 7$ allow the high order behavior of the WENO to become apparent.

\vspace{.02in}
\subsection{Strong stability performance of the new MSRK methods}
In this section we discuss the strong stability performance of the new methods in practice. 
The SSP condition is a very general condition: it holds for any convex functional and any 
starting value, for arbitrary nonlinear non-autonomous equations, assuming only that 
the forward Euler method satisfies the corresponding monotonicity condition. 
In other words, it is a bound based on the worst-case behavior.  Hence it should not be
surprising that larger step sizes are possible when one considers a particular problem
and a particular convex functional.

Here we explore the behavior of these methods in practice on 
the linear advection and nonlinear  Buckley-Leverett equations,
looking only at the total variation and positivity properties.  The scripts for
these tests can be found at \cite{testsuite}.


\noindent{\bf Example 1: Advection.} 
Our first example is the advection equation with a step function initial condition:
\begin{align}
u_t + u_x & = 0 \hspace{.75in}
    u(0,x)  =
\begin{cases}
1, & \text{if } 0 \leq x \leq 1/2 \\
0, & \text{if } x>1/2 \nonumber
\end{cases}
\end{align}
on the domain $[0,1)$ with periodic boundary conditions.
The problem was semi-discretized using a first-order forward difference on a grid 
with $N=101$ points and evolved to a final time of $t=\frac{1}{8}$.  
We used the exact  solution for the  $k-1$ initial values. 
Euler's method is TVD and positive for step sizes up to $\DtFE = \Dx$.
Table \ref{tab:linadvTVD} shows the normalized observed time step   $\frac{\Delta t_{TVD}}{\Delta x}$ for which each
method maintains the total variation diminishing property
and the observed time step  $\frac{\Delta t^+}{\Delta x}$ for which each method maintains positivity.
We compare these values to the normalized  time-step guaranteed by the theory,
$ \sspcoef \frac{\Delta t_{FE}}{\Delta x}$. 
The table also compares the effective observed TVD time-step $\frac{1}{s} \frac{\Delta t_{TVD}}{\Delta x}$,
and the effective positivity time step $\frac{1}{s} \frac{\Delta t^+}{\Delta x}$,
with the effective time-step given by the theory $\ceff \frac{\Delta t_{FE}}{\Delta x}$. 
These examples confirm that the observed positivity preserving time-step correlates well with the size of the SSP coefficient, 
and these methods compare favorably with the baseline methods. Also,  the methods perform in 
practice as well or better than the lower bound guaranteed by the theory. 

\begin{table}
{\small
\begin{tabular}{|c|cc|cc|cc|} \hline
method & $ \frac{\Delta t_{TVD}}{\Delta x} $ & $ \frac{1}{s}  \frac{\Delta t_{TVD}}{\Delta x} $  
&   $ \sspcoef \frac{\Delta t_{FE}}{\Delta x} $ & $\ceff \frac{\Delta t_{FE}}{\Delta x}$   
& $ \frac{\Delta t^+}{\Delta x} $ & $ \frac{1}{s}  \frac{\Delta t^+}{\Delta x} $ \\ \hline
SSPRK 3,3 & 1.000 & 0.333 & 1.000 & 0.333 & 1.028 & 0.342    \\
 (2,3,3) &  1.113 &  0.556 &  1.113 &  0.556 &  1.113 &  0.556  \\
 (6,3,3) &  3.777 &  0.629 &  3.777 &  0.629 &  3.777 &  0.629  \\
 (7,3,3) &  6.300 &  0.900 &  4.484 &  0.641 &  6.300 &  0.900  \\ \hline
 (2,3,4) &  0.495 &  0.248 &  0.495 &  0.248 &  0.495 &  0.248  \\
 (3,4,4) &  1.365 &  0.455 &  1.365 &  0.455 &  1.365 &  0.455  \\
non-SSP RK4,4 &  1.000 &  0.250 &  0.000 &  0.000 &  1.031 &  0.258  \\
SSP RK10,4 & 6.00 & 0.600 & 6.000 & 0.600 & 6.032 & 0.603    \\
 (7,3,4) &  3.749 &  0.536 &  3.749 &  0.536 &  4.001 &  0.572  \\ \hline
 (3,3,5) &  0.641 &  0.214 &  0.638 &  0.213 &  0.663 &  0.221  \\
 (3,4,5) &  1.001 &  0.334 &  1.001 &  0.334 &  1.001 &  0.334  \\
 (3,5,5) &  1.162 &  0.387 &  1.162 &  0.387 &  1.162 &  0.387  \\
 (6,3,5) &  2.423 &  0.404 &  2.423 &  0.404 &  2.423 &  0.404  \\ \hline
 (3,5,6) &  0.657 &  0.219 &  0.657 &  0.219 &  0.657 &  0.219  \\
 (4,4,6) &  0.985 &  0.246 &  0.971 &  0.243 &  0.985 &  0.246  \\
 (5,3,6) &  1.361 &  0.272 &  1.361 &  0.272 &  1.361 &  0.272  \\
 (6,5,6) &  2.067 &  0.345 &  2.067 &  0.345 &  2.139 &  0.357  \\
 (9,3,6) &  3.146 &  0.350 &  3.144 &  0.349 &  3.578 &  0.398  \\ \hline
 (4,5,7) &  0.901 &  0.225 &  0.882 &  0.220 &  0.917 &  0.229  \\
 (7,3,7) &  1.699 &  0.243 &  1.699 &  0.243 &  1.699 &  0.243  \\
 (7,4,7) &  1.999 &  0.286 &  1.999 &  0.286 &  1.999 &  0.286  \\ \hline 
 (8,3,8) &  0.898 &  0.112 &  0.799 &  0.100 &  0.898 &  0.112  \\
 (9,5,8) &  2.058 &  0.229 &  2.058 &  0.229 &  2.222 &  0.247  \\ \hline
 (9,4,9) &  1.638 &  0.182 &  1.590 &  0.177 &  1.672 &  0.186  \\ \hline
(20,3,10) &  2.146 &  0.107 &  1.835 &  0.092 &  2.209 &  0.110  \\ \hline
  \end{tabular}
\caption{Observed total variation diminishing (TVD) and positivity  time-step and 
        effective TVD and positivity time-step (normalized by the spatial step)
        compared   with the theoretical values for Example 1.}
         \label{tab:linadvTVD}}
  \end{table}

\noindent{\bf Example 2: Buckley-Leverett Problem:} 
We solve the Buckley-Leverett equation, a nonlinear PDE used to model two-phase flow through porous media:
\begin{align*}
    u_t+f(u)_x & = 0, & \text{ where } f(u) = \frac{u^2}{u^2 +a(1-u)^2},
\end{align*}
on $x\in[0,1)$, with periodic boundary conditions. 
We take $a=\frac{1}{3}$ and initial condition
\begin{eqnarray}
    u(x,0) =
\begin{cases}
1/2, & \text{if }x\ge1/2 \\
0, & \text{otherwise.}
\end{cases}
\end{eqnarray}
The problem is semi-discretized using a conservative scheme
with a Koren Limiter as in \cite{tsrk} with $ \Delta x = \frac{1}{100}$, and run to $t_f = \frac{1}{8}$. 
 For this problem the theoretical TVD time-step is $\Delta t_{FE} = \frac{1}{4} \Delta x = 0.0025$. 
 For each multi-step Runge--Kutta method of order $p$ we generated the  $k-1$ initial values using 
the third order Shu-Osher SSP Runge--Kutta method with
a very small time-step  $\Delta t^{p/3}$.
 \begin{figure}[H]
 \subfigure[Third order methods]{\includegraphics[scale=.32]{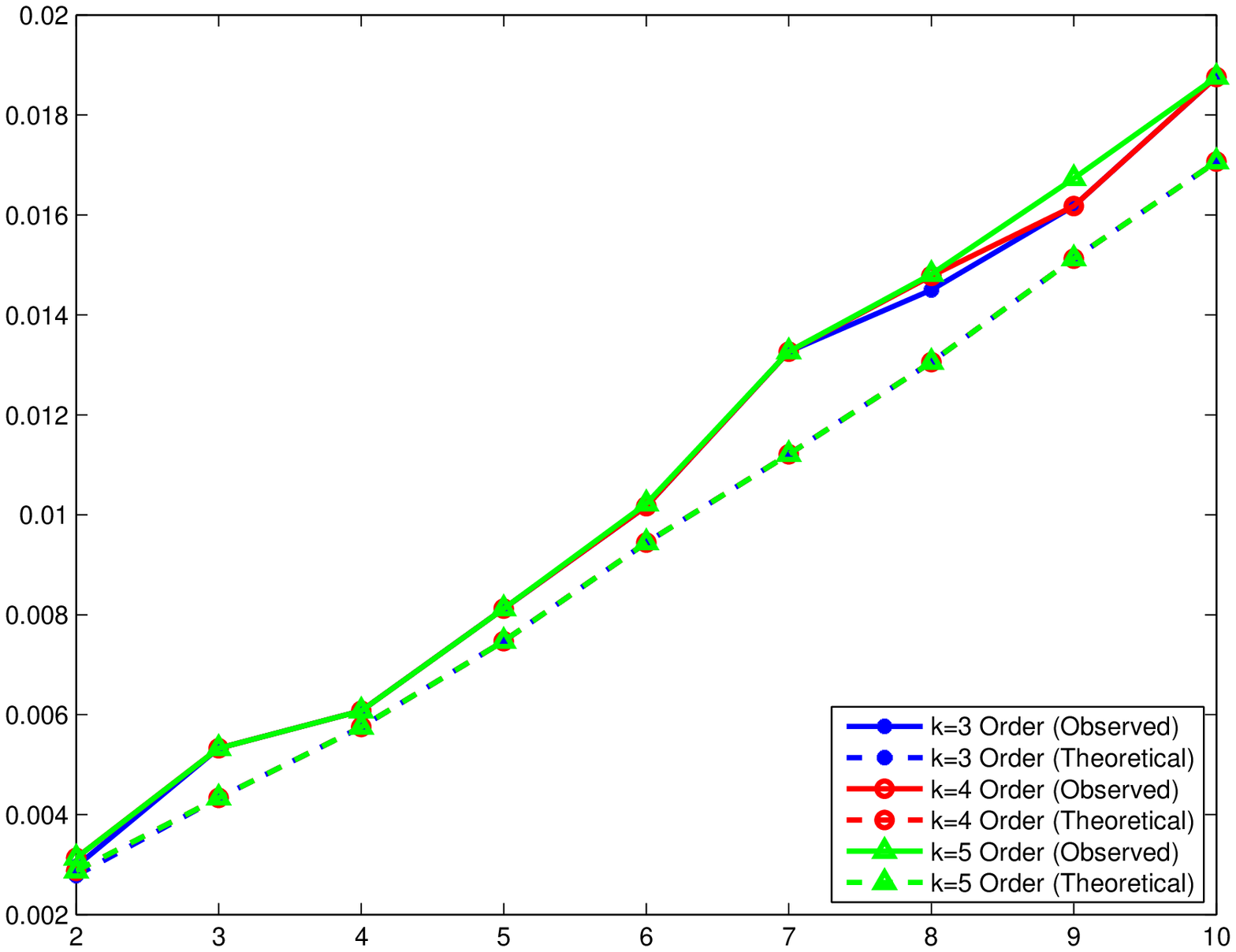}}
 \subfigure[Fourth order methods]{\includegraphics[scale=.32]{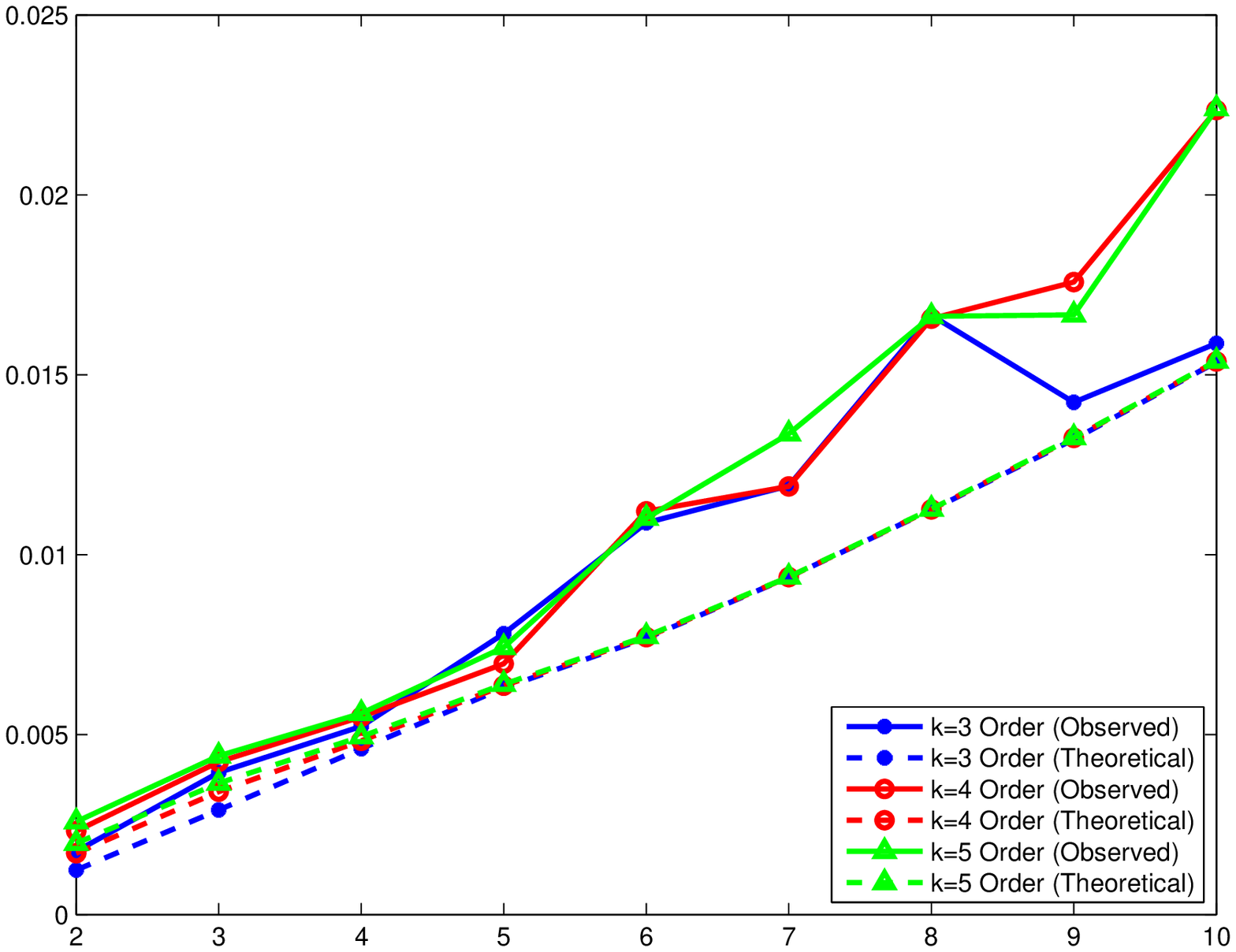}}
 \subfigure[Fifth order methods]{\includegraphics[scale=.32]{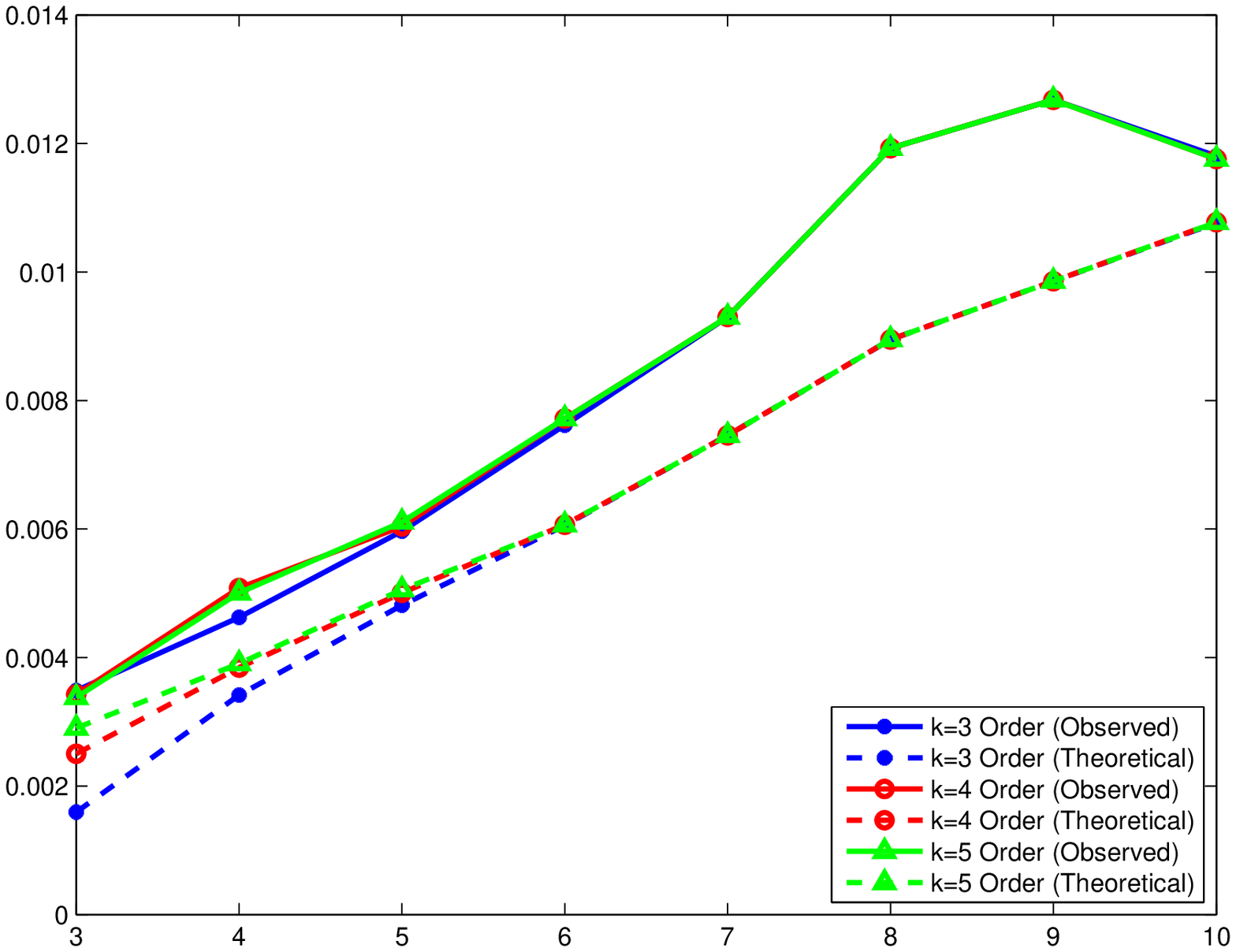}} \\
 \subfigure[Sixth order methods]{\includegraphics[scale=.32]{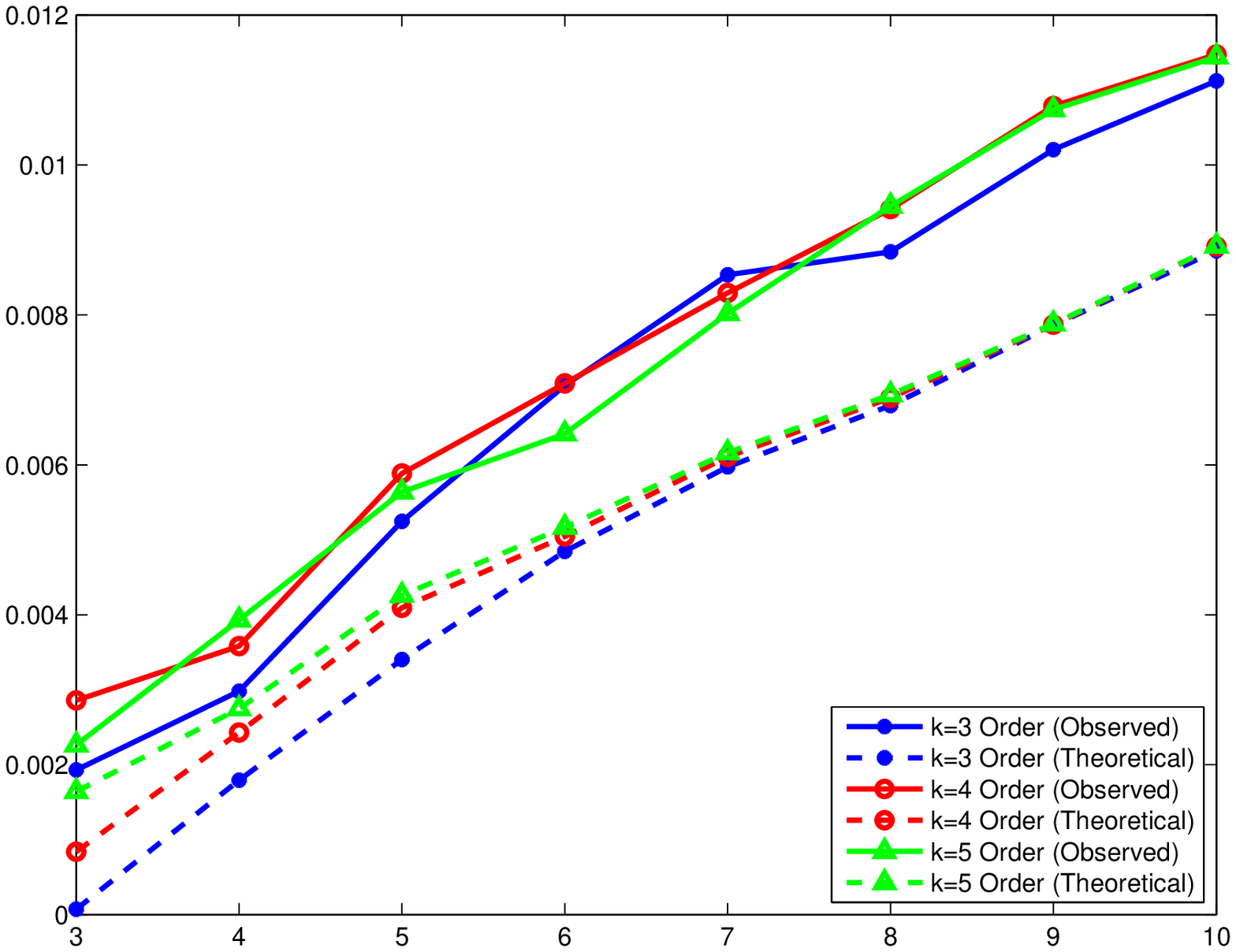}}
 \subfigure[Seventh order methods]{\includegraphics[scale=.32]{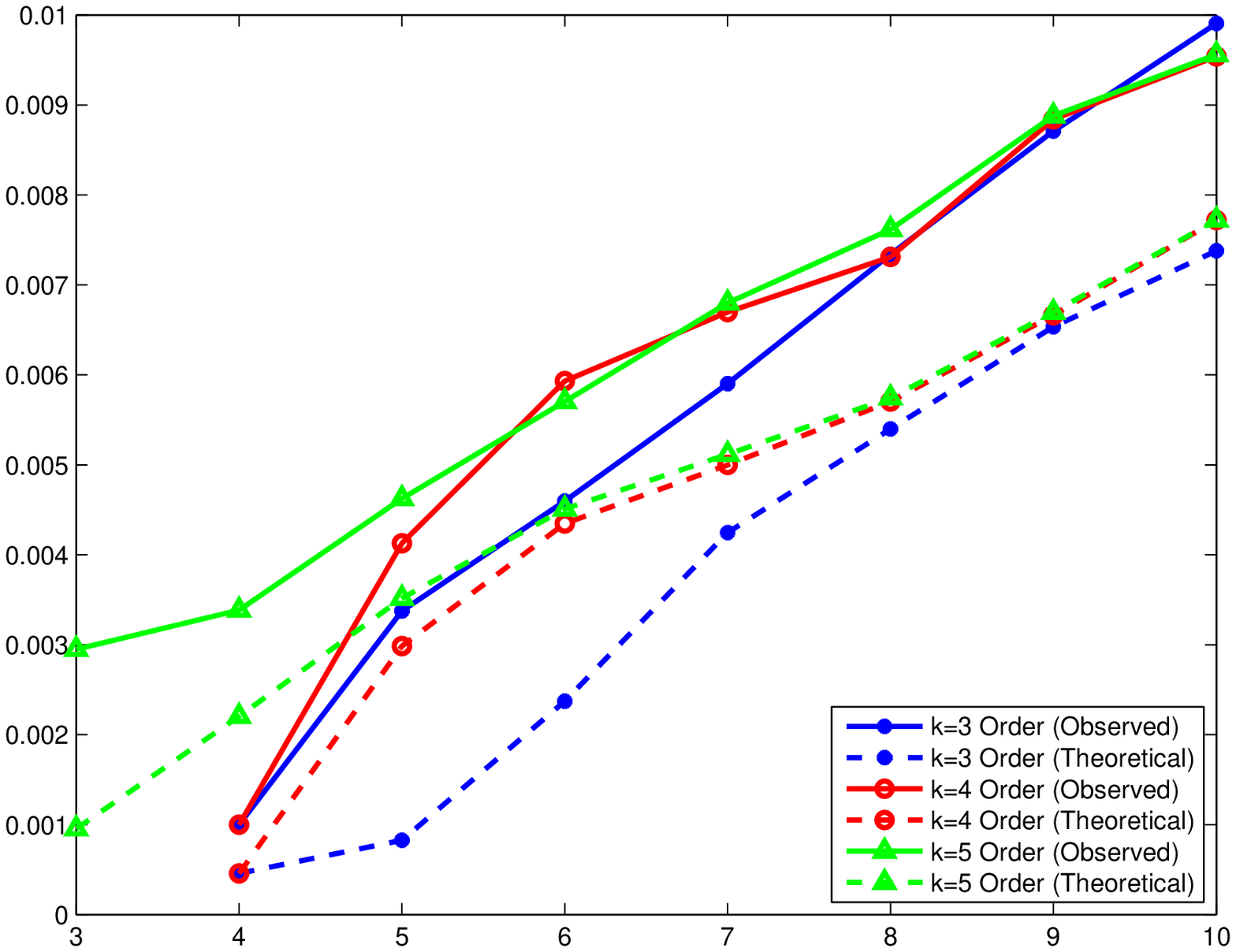}}
 \subfigure[Eighth order methods]{\includegraphics[scale=.32]{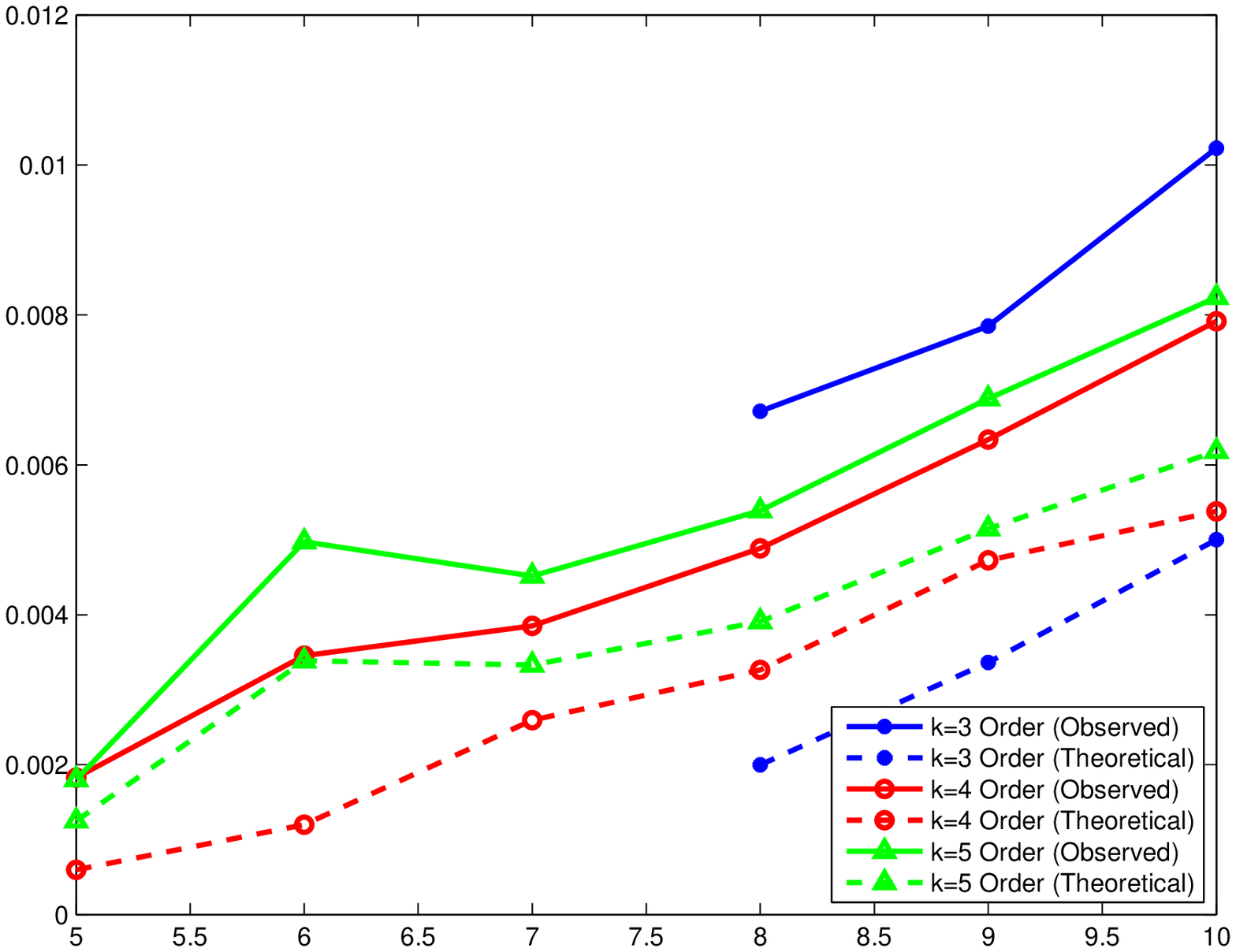}}
\caption{ The observed normalized TVD time-step ($\Delta t/\Delta x$) compared to the theoretical 
for multistep Runge--Kutta methods of order $p=3, . . ., 8$ using Example 2.}
\label{fig:BLtvd}
\end{figure}

 \begin{wraptable}[15]{r}{2.5in} 
{\small
\begin{tabular}{|c|cc|} \hline 
method & $\frac{\Delta t^{TVD}}{4 \Delta x}$  & $\frac{\Delta t^+}{4 \Delta x}$ \\ \hline
 (3,1,3) &  0.114 &  0.226    \\ 
 (3,3,3) &  0.133 &  0.235    \\ 
 (6,3,3) &  0.254 &  0.621    \\ \hline
 (4,1,4) &  0.185 &  0.302    \\ 
(10,1,4) &  0.419 &  0.754    \\ 
 (2,3,4) &  0.044 &  0.089    \\ 
 (3,4,4) &  0.106 &  0.179    \\ 
 (7,3,4) &  0.298 &  0.565    \\ \hline
 (3,3,5) &  0.087 &  0.175    \\ 
 (6,3,5) &  0.190 &  0.375    \\ 
 (3,4,5) &  0.086 &  0.173    \\ 
 (3,5,5) &  0.085 &  0.195    \\  \hline
 (5,3,6) &  0.131 &  0.262    \\ 
 (9,3,6) &  0.255 &  0.501    \\ 
 (4,4,6) &  0.090 &  0.191    \\ 
 (3,5,6) &  0.057 &  0.117    \\ 
 (6,5,6) &  0.160 &  0.349    \\  \hline
 (7,3,7) &  0.148 &  0.286    \\ 
 (8,3,7) &  0.183 &  0.346    \\ 
 (7,4,7) &  0.167 &  0.353    \\ 
 (4,5,7) &  0.085 &  0.172    \\ \hline
 (8,3,8) &  0.168 &  0.353    \\ 
 (6,4,8) &  0.086 &  0.195    \\ 
 (6,5,8) &  0.124 &  0.256    \\ 
 (9,5,8) &  0.172 &  0.348    \\ \hline
 (9,4,9) &  0.166 &  0.360    \\  \hline
(20,3,10) &  0.356 &  0.630    \\ 
\end{tabular}
\caption{Observed positivity preserving normalized time-step   compared 
  with the TVD normalized time-step for Example 2.}
 \label{tab:BLpos} }
 \end{wraptable}
The plots in Figure \ref{fig:BLtvd} show the observed  normalized time-step for TVD ($\Delta t/\Delta x$) for the number of stages, for each family of $k$-step methods. 
The dotted lines are the corresponding theoretical TVD time-step for these methods. We see that the observed values
are  significantly  higher than the theoretical values, but the observed values generally increase with the number of stages as
predicted. In Table  \ref{tab:BLpos} we compare the positivity preserving time-step to the TVD time-step. We note that the TVD time step
is always smaller than the positivity time-step, demonstrating the dependence of the observed time-step on the particular property 
desired.

\vspace{4.5in}
{\bf Acknowledgment.} This publication is based on work supported by Award No.
FIC/2010/05 - 2000000231, made by King Abdullah University of Science and
Technology (KAUST) and on AFOSR grant FA-9550-12-1-0224.

%
%
%

\newpage
\bibliography{msrk}

\begin{thebibliography}{10}

\bibitem{albrecht1996}
{\sc P.~Albrecht}, {\em The {R}unge--{K}utta theory in a nutshell}, SIAM
  Journal on Numerical Analysis, 33 (1996), pp.~1712--1735.

\bibitem{carrillo2003}
{\sc J.~Carrillo, I.~M. Gamba, A.~Majorana, and C.-W. Shu}, {\em A
  {WENO}-solver for the transients of {B}oltzmann--{P}oisson system for
  semiconductor devices: performance and comparisons with {M}onte {C}arlo
  methods}, Journal of Computational Physics, 184 (2003), pp.~498--525.

\bibitem{cheng2003}
{\sc L.-T. Cheng, H.~Liu, and S.~Osher}, {\em Computational high-frequency wave
  propagation using the level set method, with applications to the
  semi-classical limit of {S}chr{\"o}dinger equations}, Comm. Math. Sci., 1
  (2003), pp.~593--621.

\bibitem{cheruvu2007}
{\sc V.~Cheruvu, R.~D. Nair, and H.~M. Turfo}, {\em A spectral finite volume
  transport scheme on the cubed-sphere}, Applied Numerical Mathematics, 57
  (2007), pp.~1021--1032.

\bibitem{constantinescu2009}
{\sc E.~Constantinescu and A.~Sandu}, {\em {Optimal explicit
  strong-stability-preserving general linear methods}}, SIAM Journal on
  Scientific Computing, 32 (2009), pp.~3130--3150.

\bibitem{enright2002}
{\sc D.~Enright, R.~Fedkiw, J.~Ferziger, and I.~Mitchell}, {\em A hybrid
  particle level set method for improved interface capturing}, Journal of
  Computational Physics, 183 (2002), pp.~83--116.

\bibitem{feng2004}
{\sc L.~Feng, C.~Shu, and M.~Zhang}, {\em A hybrid cosmological
  hydrodynamic/{$N$-}body code based on a weighted essentially nonoscillatory
  scheme}, The Astrophysical Journal, 612 (2004), pp.~1--13.

\bibitem{ferracina2004}
{\sc L.~Ferracina and M.~N. Spijker}, {\em Stepsize restrictions for the
  total-variation-diminishing property in general {R}unge--{K}utta methods},
  SIAM Journal of Numerical Analysis, 42 (2004), pp.~1073--1093.

\bibitem{ferracina2005}
\leavevmode\vrule height 2pt depth -1.6pt width 23pt, {\em An extension and
  analysis of the {S}hu--{O}sher representation of {R}unge--{K}utta methods},
  Mathematics of Computation, 249 (2005), pp.~201--219.

\bibitem{testsuite}
{\sc S.~Gottlieb and D.~Higgs}, {\em Strong stability preserving tools test
  suite}.
\newblock \url{http://sspsite.org/ssp_tools/}.

\bibitem{sspsite}
{\sc S.~Gottlieb, D.~Higgs, and D.~I. Ketcheson}, {\em Strong stability
  preserving site}.
\newblock \url{http:www.sspsite.org/msrk.html}.

\bibitem{gottlieb2009}
{\sc S.~Gottlieb, D.~I. Ketcheson, and C.-W. Shu}, {\em {High Order Strong
  Stability Preserving Time Discretizations}}, Journal of Scientific Computing,
  38 (2009), pp.~251--289.

\bibitem{SSPbook2011}
\leavevmode\vrule height 2pt depth -1.6pt width 23pt, {\em Strong Stability
  Preserving Runge--Kutta and Multistep Time Discretizations}, World Scientific
  Press, 2011.

\bibitem{gottlieb2001}
{\sc S.~Gottlieb, C.-W. Shu, and E.~Tadmor}, {\em {Strong Stability Preserving
  High-Order Time Discretization Methods}}, SIAM Review, 43 (2001),
  pp.~89--112.

\bibitem{HGG2007}
{\sc J.~Hesthaven, S.~Gottlieb, and D.~Gottlieb}, {\em Spectral methods for
  time dependent problems}, Cambridge Monographs of Applied and Computational
  Mathematics, Cambridge University Press, 2007.

\bibitem{higueras2004a}
{\sc I.~Higueras}, {\em On strong stability preserving time discretization
  methods}, Journal of Scientific Computing, 21 (2004), pp.~193--223.

\bibitem{higueras2005a}
\leavevmode\vrule height 2pt depth -1.6pt width 23pt, {\em Representations of
  {R}unge--{K}utta methods and strong stability preserving methods}, SIAM
  Journal On Numerical Analysis, 43 (2005), pp.~924--948.

\bibitem{huang2009}
{\sc C.~Huang}, {\em Strong stability preserving hybrid methods}, Applied
  Numerical Mathematics, 59 (2009), pp.~891--904.

\bibitem{jin2005}
{\sc S.~Jin, H.~Liu, S.~Osher, and Y.-H.~R. Tsai}, {\em Computing multivalued
  physical observables for the semiclassical limit of the {S}chr{\"o}dinger
  equation}, Journal of Computational Physics, 205 (2005), pp.~222--241.

\bibitem{ketcheson2008}
{\sc D.~I. Ketcheson}, {\em Highly efficient strong stability preserving
  {R}unge--{K}utta methods with low-storage implementations}, SIAM Journal on
  Scientific Computing, 30 (2008), pp.~2113--2136.

\bibitem{ketcheson2009a}
{\sc D.~I. Ketcheson}, {\em {Computation of optimal monotonicity preserving
  general linear methods}}, Mathematics of Computation, 78 (2009),
  pp.~1497--1513.

\bibitem{tsrk}
{\sc D.~I. Ketcheson, S.~Gottlieb, and C.~B. Macdonald}, {\em Strong stability
  preserving two-step runge-kutta methods}, SIAM Journal on Numerical Analysis,
   (2012), pp.~2618--2639.

\bibitem{kraaijevanger1991}
{\sc J.~F. B.~M. Kraaijevanger}, {\em Contractivity of {R}unge--{K}utta
  methods}, BIT, 31 (1991), pp.~482--528.

\bibitem{labrunie2004}
{\sc S.~Labrunie, J.~Carrillo, and P.~Bertrand}, {\em Numerical study on
  hydrodynamic and quasi-neutral approximations for collisionless two-species
  plasmas}, Journal of Computational Physics, 200 (2004), pp.~267--298.

\bibitem{lenferink1989}
{\sc H.~W.~J. Lenferink}, {\em Contractivity-preserving explicit linear
  multistep methods}, Numerische Mathematik, 55 (1989), pp.~213--223.

\bibitem{VaillancourtAPNUM2011}
{\sc T.~Nguyen-Ba, H.~Nguyen-Thu, T.~Giordano, and R.~Vaillancourt}, {\em
  Strong-stability-preserving 3-stage {H}ermite-{B}irkhoff time-discretization
  methods}, Appl. Numer. Math., 61 (2011), pp.~487--500.

\bibitem{VaillancourtJSC2012}
{\sc T.~Nguyen-Ba, H.~Nguyen-Thu, T.~Giordano, and R.~Vaillancourt}, {\em
  Strong-stability-preserving 7-stage {H}ermite--{B}irkhoff time-discretization
  methods}, Journal of Scientific Computing, 50 (2012), pp.~63--90.

\bibitem{nguyen2011}
{\sc T.~Nguyen-Ba, H.~Nguyen-Thu, and R.~Vaillancourt}, {\em
  Strong-stability-preserving, k-step, 5-to 10-stage, {H}ermite-{B}irkhoff
  time-discretizations of order 12.}, American J. Computational Mathematics, 1
  (2011), pp.~72--82.

\bibitem{nguyenthesis}
{\sc H.~Nguyen-Thu}, {\em Strong-stability-preserving {H}ermite-{B}irkhoff
  time-discretization methods.}, Dissertation, University of Ottawa, Canada,
  (2012).

\bibitem{VaillancourtJCAM2014}
{\sc H.~Nguyen-Thu and R.~Nguyen-Ba, Truong~Vaillancourt}, {\em
  Strong-stability-preserving, {H}ermite {B}irkhoff time-discretization based
  on step methods and 8-stage explicit runge--kutta methods of order 5 and 4},
  Journal of Computational and Applied Mathematics, 263 (2014), pp.~45--58.

\bibitem{peng1999}
{\sc D.~Peng, B.~Merriman, S.~Osher, H.~Zhao, and M.~Kang}, {\em A {PDE}-based
  fast local level set method}, Journal of Computational Physics, 155 (1999),
  pp.~410--438.

\bibitem{ruuth2001}
{\sc S.~J. Ruuth and R.~J. Spiteri}, {\em Two barriers on
  strong-stability-preserving time discretization methods}, Journal of
  Scientific Computation, 17 (2002), pp.~211--220.

\bibitem{shu1988b}
{\sc C.-W. Shu}, {\em Total-variation diminishing time discretizations}, SIAM
  J. Sci. Stat. Comp., 9 (1988), pp.~1073--1084.

\bibitem{spijker2007}
{\sc M.~Spijker}, {\em Stepsize conditions for general monotonicity in
  numerical initial value problems}, SIAM Journal on Numerical Analysis, 45
  (2007), pp.~1226--1245.

\bibitem{spijker1983}
{\sc M.~N. Spijker}, {\em Contractivity in the numerical solution of initial
  value problems}, Numerische Mathematik, 42 (1983), pp.~271--290.

\bibitem{tanguay2003}
{\sc M.~Tanguay and T.~Colonius}, {\em Progress in modeling and simulation of
  shock wave lithotripsy ({SWL})}, in Fifth International Symposium on
  cavitation (CAV2003), 2003.

\end{thebibliography}

\end{document}